\newcommand{\red}{\color{red}}
\newcommand{\black}{\color{black}}
\newtheorem{theorem}{Theorem}
\newtheorem{corollary}[theorem]{Corollary}
\newtheorem{definition}[theorem]{Definition}
\newtheorem{lemma}[theorem]{Lemma}
\newtheorem{proposition}[theorem]{Proposition}
\newtheorem{remark}[theorem]{Remark}
\newtheorem*{mains}{Theorem}
\begin{document}

\title{On the simple transposed Poisson algebras and Jordan superalgebras}

\thanks{The author was supported by the Spanish Government through the Ministry of Universities grant `Margarita Salas', funded by the European Union - NextGenerationEU, and by the Centre for Mathematics of the University of Coimbra - UIDB/00324/2020, funded by the Portuguese Government through FCT/MCTES}

\author[Amir Fernández Ouaridi]{Amir Fernández Ouaridi}
\address{Amir Fernández Ouaridi. \newline \indent University of Cádiz, Department of Mathematics, Puerto Real (Spain).
\newline \indent University of Coimbra, CMUC, Department of Mathematics, Apartado 3008,
EC Santa Cruz,
3001-501 Coimbra
(Portugal).}
\email{{\tt amir.fernandez.ouaridi@gmail.com}}


\thispagestyle{empty}

\begin{abstract}

We prove that a transposed Poisson algebra is simple if and only if its associated Lie bracket is simple. Consequently, any simple finite-dimensional transposed Poisson algebra over an algebraically closed field of characteristic zero is trivial. Similar results are obtained for transposed Poisson superalgebras.
Furthermore, we show that the Kantor double of a transposed Poisson algebra is a Jordan superalgebra. Additionally, a simplicity criterion for the Kantor double of a transposed Poisson algebra is obtained. 

\bigskip

{\it 2020MSC}: 17A70, 17B63, 17A36.

{\it Keywords}: Lie algebra, Poisson algebra, transposed Poisson algebra, Jordan superalgebra.
\end{abstract}

\maketitle

\section{Introduction}

In the last years, the study of Poisson algebras has led to various related Poisson type algebraic structures, including generic Poisson algebras, algebras of Jordan brackets, Lie-Yamaguti algebras, Gerstenhaber algebras, Novikov-Poisson algebras, among many others. 
In the paper \cite{tpa}, a dual class of the Poisson algebras was introduced, the transposed Poisson algebras, by changing the roles of the two multiplications in the Leibniz rule. Precisely, a transposed Poisson algebra is a vector space $\mathcal{P}$ endowed with two operations: an associative commutative multiplication $-\circ-$ and an associated Lie bracket $\left\{\cdot, \cdot\right\}$. Additionally, these two operations are required to satisfy the (transposed) Leibniz rule, that is, for any $x,y,z\in \mathcal{P}$ we have:
\begin{equation}\label{tpaid}
    2 x \circ \left\{y, z\right\} = \left\{x \circ y, z\right\} + \left\{y, x \circ z\right\}. 
\end{equation}
The authors show that transposed Poisson algebras share some common properties with Poisson algebras, including the closure undertaking tensor products and the Koszul self-duality as an operad. Since then, the interest in this class has been increasing. The transposed Leibniz rule was realized in \cite{FKL} as the left multiplication of the associative commutative algebra is a $\frac{1}{2}$-derivation of the Lie bracket and this realization was fundamental on the classification of low-dimensional transposed Poisson algebras \cite{bfk23} or on the generalization of the notion to the $n$-ary case \cite{bfk22}. Recall that a $\frac{1}{2}$-derivation of a non-associative algebra $(\mathcal{A}, \cdot)$ is a linear map $D$ in $\mathcal{A}$ such that $2 D(x\cdot y) = D(x)\cdot y + x\cdot D(y)$.
Likewise, the notion of transposed Poisson superalgebra has been introduced in the usual way, with the $\mathbb{Z}_2$-graded version of the Leibniz rule being equivalent to the left multiplication of the associative commutative algebra being a $\frac{1}{2}$-superderivation of the Lie bracket.  

In this paper, we focus our interest on simple transposed Poisson (super) algebras. Kac \cite{kac77} used the classification of the simple Lie superalgebras and the TKK functor for Jordan superalgebras in order to classify all the simple finite-dimensional Jordan superalgebras over an algebraically closed field of characteristic zero. Later, Kantor \cite{kantor92} introduced an invertible way to construct a Jordan superalgebra from a Poisson algebra (the Kantor double), this construction preserves the simplicity in both directions, so a classification of the simple finite-dimensional Poisson algebras over an algebraically closed field of characteristic zero was obtained. Also, one of the families of known infinite dimensional simple Jordan superalgebras is precisely the family of those superalgebras that are obtained by the Kantor double \cite{cantarini}.

Recently, it was proven that if a Poisson algebra $(\mathcal{P}, \circ, \left\{\cdot, \cdot\right\})$ is simple, then the Lie algebra $\left\{\mathcal{P}, \mathcal{P}\right\}/(\left\{\mathcal{P}, \mathcal{P}\right\}\cap Z)$, where $Z$ denotes the center of $(\mathcal{P}, \left\{\cdot, \cdot\right\})$, is simple \cite{aa19}. The ``transposed" version of this result does not hold, namely, in a simple transposed Poisson algebra $\mathcal{P}$, the algebra $\mathcal{P}\circ \mathcal{P}/(\mathcal{P}\circ\mathcal{P}\cap Z)$ , where $Z$ is the center of $(\mathcal{P}, \circ)$, is not always simple. In fact, the Witt algebra together with the Laurent polynomials is a simple transposed Poisson algebra, the algebra of Laurent polynomials is perfect and centerless, but not simple. However, a stronger result holds. It turns out that simple transposed Poisson (super) algebras are those that arise on simple Lie (super) algebras, if we omit the trivial case in which the (super) Lie bracket is abelian. Our result is the following.
\begin{mains}
    A transposed Poisson (super) algebra is simple if and only if the associated (super) Lie bracket is simple. 
\end{mains}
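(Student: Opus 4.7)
The plan is to prove the two implications separately, with the forward implication carrying essentially all of the work.

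For the converse direction, I would observe that any transposed Poisson ideal is by definition closed under the Lie bracket with $\mathcal{P}$, hence is a Lie ideal; so if $(\mathcal{P}, \{\cdot,\cdot\})$ is simple, any TPA ideal must be $0$ or $\mathcal{P}$. For the forward direction, assume $\mathcal{P}$ is simple as a transposed Poisson algebra. I would first identify two canonical TPA ideals built from the Lie structure: the Lie center $Z = \{z \in \mathcal{P} : \{z, \mathcal{P}\} = 0\}$ and the derived subalgebra $\{\mathcal{P}, \mathcal{P}\}$. Writing $\{a \circ z, b\} = 2\, a \circ \{z, b\} - \{z, a \circ b\}$ and $a \circ \{x, y\} = \tfrac{1}{2}\{a \circ x, y\} + \tfrac{1}{2}\{x, a \circ y\}$ via the transposed Leibniz rule, one checks $a \circ Z \subseteq Z$ and $a \circ \{\mathcal{P}, \mathcal{P}\} \subseteq \{\mathcal{P}, \mathcal{P}\}$, so both subspaces are TPA ideals. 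By simplicity each is $0$ or $\mathcal{P}$; excluding the trivial case of an abelian Lie bracket, $(\mathcal{P}, \{\cdot, \cdot\})$ is both perfect and centerless.

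For an arbitrary Lie ideal $I$, the subspace $I + \mathcal{P} \circ I$ is again a TPA ideal: closure under $\circ$ is immediate from associativity, and closure under the Lie bracket follows from $\{x \circ y, z\} = 2\, x \circ \{y, z\} - \{y, x \circ z\}$ for $y \in I$, whose first term lies in $\mathcal{P} \circ I$ and whose second lies in $I$. By simplicity, either $I + \mathcal{P} \circ I = 0$ (so $I = 0$) or $I + \mathcal{P} \circ I = \mathcal{P}$; the latter case with $I$ proper is the main obstacle of the proof.

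To handle it, for each $x \in \mathcal{P}$ I would introduce the map $\psi_x : I \to \mathcal{P}/I$ defined by $\psi_x(y) = \overline{x \circ y}$. Since $\{y, x \circ z\} \in I$ for $y \in I$, the transposed Leibniz rule yields the intertwining identity $\mathrm{ad}_z \circ \psi_x = 2\, \psi_x \circ \mathrm{ad}_z$ for all $z \in \mathcal{P}$. Iterating it over $\mathrm{ad}_{z_1} \mathrm{ad}_{z_2}$ gives $\mathrm{ad}_{\{z_1, z_2\}} \psi_x = 4\, \psi_x\, \mathrm{ad}_{\{z_1, z_2\}}$, while applying the identity once to $z = \{z_1, z_2\}$ gives $\mathrm{ad}_{\{z_1, z_2\}} \psi_x = 2\, \psi_x\, \mathrm{ad}_{\{z_1, z_2\}}$; in characteristic not $2$, these together force $\psi_x \circ \mathrm{ad}_{\{\mathcal{P}, \mathcal{P}\}} = 0$, which by perfectness becomes the key relation $\mathcal{P} \circ \{\mathcal{P}, I\} \subseteq I$. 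With this in hand, for any $u = i + x \circ y \in \mathcal{P}$ (with $i, y \in I$) and any $w \in \mathcal{P}$, the transposed Leibniz rule expresses $\{w, x \circ y\}$ as a sum of one term in $\mathcal{P} \circ \{\mathcal{P}, I\} \subseteq I$ and one in $\{I, \mathcal{P}\} \subseteq I$, so $\{w, u\} \in I$. Hence $\{\mathcal{P}, \mathcal{P}\} \subseteq I$, and perfectness gives $I = \mathcal{P}$. The superalgebra case follows the same outline with appropriate sign insertions in the graded transposed Leibniz rule.
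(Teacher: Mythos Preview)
Your argument is correct, and it takes a genuinely different route from the paper's. The paper introduces the auxiliary notion of a \emph{transposed quasi-ideal} (a proper subspace $\mathcal{I}$ with $\{\mathcal{P},\mathcal{I}\}\subset\mathcal{I}$ and $\{\mathcal{P}\mathcal{I},\mathcal{P}\}\subset\mathcal{I}$) and then proves two lemmas: first, that simple transposed Poisson algebras have no such quasi-ideals (via the identity $x\{y,z\}+y\{z,x\}+z\{x,y\}=0$ and a maximal-subspace argument), and second, that when the Lie bracket is perfect every Lie ideal is a transposed quasi-ideal (via the identity $\{hx,\{y,z\}\}+\{hy,\{z,x\}\}+\{hz,\{x,y\}\}=0$). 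Both of these identities are imported from \cite{tpa}. Your approach, by contrast, uses only the transposed Leibniz rule and the Jacobi identity directly: the intertwining relation $\mathrm{ad}_z\circ\psi_x=2\,\psi_x\circ\mathrm{ad}_z$ is exactly the statement that left $\circ$-multiplication is a $\tfrac12$-derivation of the bracket, and your ``$2=4$'' trick (comparing $\mathrm{ad}_{\{z_1,z_2\}}\psi_x$ computed via the commutator versus directly) extracts the key inclusion $\mathcal{P}\circ\{\mathcal{P},I\}\subset I$ without invoking those auxiliary polynomial identities. Note that this inclusion is equivalent to the paper's $\{\mathcal{P}I,\mathcal{P}\}\subset I$ by one more application of the Leibniz rule, so the two proofs establish the same core fact by different means. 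What the paper's framing buys is a reusable structural concept (transposed quasi-ideals) that it deploys again later, e.g., in the direct-sum theorem; what your argument buys is self-containment and a transparent reason, rooted in the eigenvalue mismatch $2\neq 4$, for why characteristic $\neq 2$ enters.
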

As a consequence, the classification of the simple finite-dimensional transposed Poisson algebras and superalgebras over an algebraically closed field of characteristic zero follows thanks to the previous studies of the $\frac{1}{2}$-(super)derivations of the simple Lie (super) algebras \cite{FKL, fili98}. Indeed, it turns out that there are no non-trivial simple finite-dimensional transposed Poisson (super) algebras. In addition, we study the impact of one of the multiplications being perfect on the other one, and we prove that the radical of the Lie algebra is an ideal of the transposed Poisson algebra.

\medskip

Furthermore, we are considering the Kantor double of a transposed Poisson algebra $\mathfrak{J}(\mathcal{P})$. Let us recall this construction briefly.
Following Kantor's notation, given a vector space $\mathbb{V}$, we denote by $\mathbb{V}^s$ the vector space which is a copy of $\mathbb{V}$, but with odd parity.
Now, suppose $(\mathcal{P}, \circ, \left\{\cdot, \cdot\right\})$ 
is an algebra equipped with two multiplications: the first one is associative and commutative and the second one is skew-symmetric. This class of algebras are called dot-bracket algebras \cite{km95}. Consider the algebra $\mathfrak{J}(\mathcal{P})$ with underlying vector space $\mathcal{P} \oplus \mathcal{P}^s$ and with the operation $*$ given for $a, b \in \mathcal{P}$ and corresponding $a^s, b^s \in \mathcal{P}^s$ by the following relations
\begin{equation*}
    a*b = a\circ b, \quad a^s * b = a * b^s = (a\circ b)^s, \quad a^s * b^s = \left\{a,b\right\}.
\end{equation*}
Thus, $\mathfrak{J}(\mathcal{P}) = \mathcal{P} \oplus \mathcal{P}^s$ is a superalgebra with double the dimension of $\mathcal{P}$, which is called the Kantor double of $\mathcal{P}$. A similar construction, can be considered if $\mathcal{P}$ is a dot-bracket superalgebra \cite{km95}. Kantor proved that the double of a Poisson (super) algebra is a Jordan superalgebra \cite{kantor92}, but there exist other non-Poisson (super) algebras whose Kantor doubles are Jordan superalgebras.  Recall that a Jordan superalgebra is a superalgebra $(\mathcal{J} := \mathcal{J}_0 \oplus\mathcal{J}_1, \cdot)$ satisfying the supercommutativity, $x \cdot y = (-1)^{xy} y \cdot x$, and the Jordan superidentity
\begin{equation} \label{sjord}
    (-1)^{xz}[L_{x\cdot y}, L_z] + (-1)^{yx}[L_{y\cdot z}, L_x] + (-1)^{zy}[L_{z\cdot x}, L_y] = 0.
\end{equation}
for any homogeneous elements $x, y, z\in \mathcal{J}_0 \cup \mathcal{J}_1$ and where $L_x \in End(\mathcal{J})$ denotes the linear operator of left multiplication by $x\in \mathcal{J}$ and the bracket of operators is $[L_{x}, L_{y}] = L_{x} L_{y} - (-1)^{xy} L_{y} L_{x}$.

The algebras that produce a Jordan superalgebra through the Kantor double are usually called {Jordan brackets} \cite{mz19}. In the case in which $(\mathcal{P}, \circ)$ is unital, Jordan brackets are characterized by a set of pseudoidentities \cite{km95}, involving the distinguished derivation $D(x) =\left\{x, 1\right\}$. Later, it was shown that in the unital case, Jordan brackets are in one to one correspondence with contact brackets \cite{cantarini}, also known as generalized Poisson algebras. Unital transposed Poisson algebras are contact brackets and Jordan brackets, but for non-unital algebras they are not necessarily. In fact, one can prove that for unital transposed Poisson algebras \cite{bfk23}, the associated Lie algebra is given by $\left\{x, y\right\}= D(x)\circ y - x\circ D(y)$, where $D(x) =\left\{x, 1\right\}$. { Moreover, the Kantor double of the commutator of a Novikov-Poisson algebra is a Jordan superalgebra \cite{z14}. Recall that if we take the commutator of the Novikov multiplication of a Novikov-Poisson algebra, we obtain a transposed Poisson algebra \cite{tpa}. Whether every transposed Poisson algebra can be realized in this manner remains uncertain. In any case, finding new classes of Jordan brackets is an interesting issue in Jordan algebras theory, and it turns out that transposed Poisson algebras are Jordan brackets.  }
\begin{mains}
    The Kantor double $\mathfrak{J}(\mathcal{P})$ of a transposed Poisson algebra $\mathcal{P}$ is a Jordan superalgebra, i.e., transposed Poisson algebras are Jordan brackets. Moreover, the algebra $\mathfrak{J}(\mathcal{P})$ is simple if and only if $\mathcal{P}$ is simple and $\mathcal{P}\circ\mathcal{P} = \mathcal{P}$.
\end{mains}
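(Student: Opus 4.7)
My plan is to establish the two claims of the theorem separately.

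For the Jordan superalgebra structure of $\mathfrak{J}(\mathcal{P})$, I would verify the supercommutativity and the Jordan superidentity \eqref{sjord} by a direct calculation. Supercommutativity is immediate from the commutativity of $\circ$ and the skew-symmetry of $\{\cdot,\cdot\}$. For the superidentity, I would fix homogeneous $x,y,z$ and split into the four parity patterns of $(x,y,z)$: the all-even case reduces to the Jordan identity for the associative commutative algebra $(\mathcal{P},\circ)$ and is automatic; the patterns with one or two odd arguments, after expanding the left multiplication operators, become consequences of the transposed Leibniz rule \eqref{tpaid} together with the associativity and commutativity of $\circ$; and the all-odd pattern combines the Jacobi identity for $\{\cdot,\cdot\}$ with \eqref{tpaid}. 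Thanks to the cyclic symmetry of \eqref{sjord} one representative per parity pattern suffices; the all-odd case is the main computational bulk.

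For the forward direction of the simplicity criterion, observe that for any transposed Poisson ideal $I\subseteq\mathcal{P}$ the subspace $I\oplus I^s$ is readily seen to be a graded ideal of $\mathfrak{J}(\mathcal{P})$, so simplicity of $\mathfrak{J}(\mathcal{P})$ forces $\mathcal{P}$ to be simple. Moreover, the square $\mathfrak{J}(\mathcal{P})^2$ is an ideal of $\mathfrak{J}(\mathcal{P})$ equal to $(\mathcal{P}\circ\mathcal{P}+\{\mathcal{P},\mathcal{P}\})\oplus(\mathcal{P}\circ\mathcal{P})^s$; since simple Jordan superalgebras satisfy $\mathfrak{J}^2=\mathfrak{J}$, comparing the odd parts yields $\mathcal{P}\circ\mathcal{P}=\mathcal{P}$.

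For the backward direction, assume $\mathcal{P}$ is simple with $\mathcal{P}\circ\mathcal{P}=\mathcal{P}$; by the first main theorem the Lie algebra $(\mathcal{P},\{\cdot,\cdot\})$ is simple and hence centreless. Let $J=J_0\oplus I^s$ be a nonzero graded ideal of $\mathfrak{J}(\mathcal{P})$. Expanding the ideal condition $\mathfrak{J}(\mathcal{P})*J\subseteq J$ gives that $J_0$ and $I$ are $\circ$-ideals of $\mathcal{P}$ with $\mathcal{P}\circ J_0\subseteq I$ and $\{\mathcal{P},I\}\subseteq J_0$. When $J_0=0$, $\{\mathcal{P},I\}=0$ places $I$ in the centre of the Lie algebra and forces $I=0$. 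When $I\neq 0$, an iterative application of \eqref{tpaid}, via the identity $c\circ\{b,i\}=\tfrac{1}{2}\{c\circ b,i\}+\tfrac{1}{2}\{b,c\circ i\}$ for $i\in I$, shows that the Lie ideal generated by $I$ is automatically $\circ$-closed, hence a nonzero transposed Poisson ideal of $\mathcal{P}$, so equal to $\mathcal{P}$ by simplicity; combined with perfectness and the constraint $\{\mathcal{P},I\}\subseteq J_0$ this forces $J=\mathfrak{J}(\mathcal{P})$, contradicting properness.

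The main obstacle is the remaining case $I=0$, which by the ideal conditions forces $J_0\subseteq\mathrm{Ann}_\circ(\mathcal{P})$. Unlike the classical Poisson setting, $\mathrm{Ann}_\circ(\mathcal{P})$ is not automatically closed under the Lie bracket in a transposed Poisson algebra, so simplicity of $\mathcal{P}$ cannot be invoked directly. The key identity here is $c\circ\{a,y\}=\tfrac{1}{2}\{c\circ a,y\}$, valid for $y\in\mathrm{Ann}_\circ$ as a consequence of \eqref{tpaid}: combined with $\mathcal{P}\circ\mathcal{P}=\mathcal{P}$ and the centrelessness of the simple Lie algebra, a careful analysis of the iterated Lie-bracket orbit of $\mathrm{Ann}_\circ$ together with the transposed Poisson ideal it generates forces $\mathrm{Ann}_\circ(\mathcal{P})=0$, completing the argument.
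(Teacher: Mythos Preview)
Your plan for the Jordan superalgebra claim and for the forward direction of the simplicity criterion matches the paper's approach and is correct.

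The gap is in the backward direction. In the case $I\neq 0$ you show that the Lie ideal generated by $I$ is $\circ$-closed, hence a transposed Poisson ideal, hence equals $\mathcal{P}$. But this only says that the \emph{closure} of $I$ under iterated brackets is $\mathcal{P}$; it does not give $I=\mathcal{P}$, nor does it give $J_0=\mathcal{P}$. The constraint $\{\mathcal{P},I\}\subseteq J_0$ tells you $I_1\subseteq J_0$, but you have no control over $\{\mathcal{P},J_0\}$, so you cannot push the higher $I_k$ into $J_0$ and the argument stalls. Likewise in the case $I=0$: your identity $c\circ\{a,y\}=\tfrac12\{c\circ a,y\}$ for $y\in\mathrm{Ann}_\circ$ is correct, but ``the iterated Lie-bracket orbit of $\mathrm{Ann}_\circ$ together with the transposed Poisson ideal it generates'' again only shows that this closure is $\mathcal{P}$, not that $\mathrm{Ann}_\circ$ itself vanishes. (There \emph{is} a short argument hiding in your identity: comparing $(c_1c_2)\circ\{a,y\}=\tfrac12\{c_1c_2a,y\}$ with $c_1\circ(c_2\circ\{a,y\})=\tfrac14\{c_1c_2a,y\}$ forces $\{\mathcal{P}^3,y\}=0$, hence $y$ is Lie-central; but that is not the route you describe.)

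What you are missing is the paper's key device: the Poisson-style \emph{quasi-ideal} (a subspace $\mathcal{I}$ with $\mathcal{P}\mathcal{I}\subset\mathcal{I}$ and $\{\mathcal{P},\mathcal{I}\}\mathcal{P}\subset\mathcal{I}$) together with Lemma~\ref{perfectquasi}, which says that a simple transposed Poisson algebra with $\mathcal{P}\mathcal{P}=\mathcal{P}$ has no quasi-ideals. That lemma, in turn, rests on the identity \eqref{propeq5}, $2uv\{x,y\}=\{xu,vy\}+\{xv,uy\}$, which is a genuine consequence of the transposed Leibniz rule beyond the raw identity \eqref{tpaid}. With this in hand the argument is immediate: your $I$ satisfies $\mathcal{P}I\subseteq I$ and $\{\mathcal{P},I\}\mathcal{P}\subseteq J_0\mathcal{P}\subseteq I$, so $I$ is a quasi-ideal, hence $I=0$ or $I=\mathcal{P}$; in the latter case $J_0\supseteq\{\mathcal{P},\mathcal{P}\}=\mathcal{P}$ and you are done, and the former case reduces to showing $\mathcal{I}_0$ is also a quasi-ideal (again via \eqref{propeq5}). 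Your proposal never invokes \eqref{propeq5} or the quasi-ideal notion, and the Leibniz rule alone, applied in the way you describe, does not close the gap.
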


{Lastly, we show that  simple transposed Poisson algebras with a perfect associative commutative multiplication are unital. Note that this is similar to what happens with Novikov-Poisson algebras (see \cite[Lemma 7]{z14}). Thus, there are no simple Jordan superalgebras that can be constructed through the Kantor double from non-unital transposed Poisson algebras. }

\medskip

Any vector space, algebra or homomorphism in this paper will be over an arbitrary base field $\mathbb{F}$ of characteristic not two, if nothing is specified. We will denote the associative commutative multiplication $\circ$ by concatenation, and we will assume that it is not necessarily unital.

\section{On the simple transposed Poisson algebras}\label{sec2}

In this section, we assume the Lie bracket is not abelian for any transposed Poisson algebra. An ideal in a transposed Poisson algebra $\mathcal{P}$ is a proper subspace $\mathcal{I}$ such that $\mathcal{I}\mathcal{P} \subset \mathcal{I}$ and $\left\{\mathcal{I}, \mathcal{P}\right\}\subset \mathcal{I}$. We say $\mathcal{P}$ is simple if it contains no ideals. Given a transposed Poisson algebra $\mathcal{P}$, if the Lie algebra $(\mathcal{P}, \left\{\cdot, \cdot\right\})$ is not perfect (that is, $\mathcal{P}^2_{\left\{\cdot, \cdot\right\}} := \left\{\mathcal{P}, \mathcal{P}\right\} \neq \mathcal{P}$), then by Leibniz identity (\ref{tpaid}) we have that $\mathcal{P}^2_{\left\{\cdot, \cdot\right\}}$ is an ideal of the associative commutative algebra $(\mathcal{P}, \circ)$, and so is an ideal of $\mathcal{P}$. Therefore, in any simple transposed Poisson algebra the Lie bracket must be perfect. Moreover, let us introduce the following notion.

\begin{definition}
    A transposed quasi-ideal of a transposed Poisson algebra $(\mathcal{P}, \circ, \left\{\cdot,\cdot\right\})$ is a proper subspace $\mathcal{I}$ of $\mathcal{P}$ such that
    $\left\{\mathcal{P}, \mathcal{I}\right\}\subset \mathcal{I}$ and $\left\{\mathcal{P}\mathcal{I}, \mathcal{P}\right\}\subset \mathcal{I}$.
\end{definition}

This notion is the transposed version of the notion of a quasi-ideal of a Poisson algebra. Recall that a quasi-ideal is a proper subspace $\mathcal{I}$ of $\mathcal{P}$ such that $\mathcal{P}\mathcal{I}\subset \mathcal{I}$ and $\left\{\mathcal{P}, \mathcal{I}\right\} \mathcal{P}\subset \mathcal{I}$. 
Any simple Poisson algebra contains no quasi-ideals and the same is valid for transposed Poisson algebras and transposed quasi-ideals as we show in the next result.

\begin{lemma}\label{quasi}
    A simple transposed Poisson algebra contains no transposed quasi-ideals.
\end{lemma}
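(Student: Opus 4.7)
The plan is to construct an ordinary ideal of $\mathcal{P}$ directly from a hypothetical nontrivial transposed quasi-ideal $\mathcal{I}$, and then use simplicity to derive a contradiction. The natural candidate is $\mathcal{J} := \mathcal{I} + \mathcal{P}\mathcal{I}$, which contains $\mathcal{I}$ and is enlarged just enough to absorb left multiplication by the whole algebra.

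First I would verify that $\mathcal{J}$ is a two-sided ideal of $\mathcal{P}$. Closure under the associative commutative product is immediate: $\mathcal{P}\mathcal{J} = \mathcal{P}\mathcal{I} + \mathcal{P}(\mathcal{P}\mathcal{I}) \subset \mathcal{P}\mathcal{I} \subset \mathcal{J}$. Closure under the Lie bracket is precisely what the two defining conditions of a transposed quasi-ideal deliver: $\{\mathcal{P}, \mathcal{I}\} \subset \mathcal{I}$, and by skew-symmetry $\{\mathcal{P}, \mathcal{P}\mathcal{I}\} = -\{\mathcal{P}\mathcal{I}, \mathcal{P}\} \subset \mathcal{I}$, so that $\{\mathcal{P}, \mathcal{J}\} \subset \mathcal{I} \subset \mathcal{J}$. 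No appeal to the transposed Leibniz rule is required for this step; the definition of quasi-ideal was tailor-made for it.

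The main obstacle, and the only non-routine step, is ruling out the case $\mathcal{J} = \mathcal{P}$; without it one cannot conclude that $\mathcal{J}$ is proper. Here I would invoke the observation from the paragraph preceding the lemma: in any simple transposed Poisson algebra the associated Lie bracket must be perfect, that is, $\{\mathcal{P}, \mathcal{P}\} = \mathcal{P}$. If $\mathcal{J}$ equalled $\mathcal{P}$, then
\begin{equation*}
    \mathcal{P} \;=\; \{\mathcal{P}, \mathcal{P}\} \;=\; \{\mathcal{P}, \mathcal{J}\} \;\subset\; \mathcal{I},
\end{equation*}
contradicting that $\mathcal{I}$ is a proper subspace. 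Thus $\mathcal{J}$ is a proper ideal, and simplicity of $\mathcal{P}$ forces $\mathcal{J} = 0$, hence $\mathcal{I} = 0$, contradicting the assumption that $\mathcal{I}$ was a nontrivial transposed quasi-ideal. This establishes the lemma.
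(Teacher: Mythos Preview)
Your proof is correct and is in fact more economical than the paper's. Both arguments build an honest ideal out of the transposed quasi-ideal $\mathcal{I}$ and then invoke perfection of the Lie bracket to rule out the full-space case; the difference is in which ideal is built. The paper takes the maximal subspace $\mathcal{I}'$ with $\{\mathcal{P},\mathcal{I}'\}\subset\mathcal{I}$ and must then work to show $\mathcal{I}'\mathcal{P}\subset\mathcal{I}'$, invoking the auxiliary transposed-Poisson identity $x\{y,z\}+y\{z,x\}+z\{x,y\}=0$ together with perfection of the bracket. Your choice $\mathcal{J}=\mathcal{I}+\mathcal{P}\mathcal{I}$ sidesteps that entirely: closure under $\circ$ is automatic from associativity, and closure under $\{\cdot,\cdot\}$ is literally the two defining conditions of a transposed quasi-ideal, so no special identity is needed. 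The one thing the paper's construction buys is reusability: the maximal subspace $\mathcal{I}'$ reappears in the proof of Theorem~\ref{dsum} on direct sums of simple ideals, where the explicit description of $\mathcal{I}'$ is exploited. For the lemma in isolation, however, your argument is strictly simpler.
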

\begin{proof}
    Suppose $\mathcal{I}$ is a transposed quasi-ideal of $\mathcal{P}$. Consider a maximal subspace $\mathcal{I}'$ such that $\left\{\mathcal{P}, \mathcal{I}'\right\} \subset \mathcal{I}$. We will show that $\mathcal{I}'$ is an ideal of $\mathcal{P}$. Observe that the Lie bracket is perfect, because $\mathcal{P}$ is simple, so $\mathcal{I}'\neq \mathcal{P}$. Also, we can assume  $\mathcal{P}\mathcal{I} \subset \mathcal{I}'$, by the maximality of $\mathcal{I}'$ and since  $\mathcal{I}$ is a transposed quasi-ideal.
    Now, since $x[y,z] + y[z,x] + z[x,y] = 0$ (see \cite[Theorem 2.5]{tpa}), we have that 
    $\mathcal{I}' \left\{y, z\right\} \subset y\left\{z, \mathcal{I}'\right\} + z\left\{\mathcal{I}', y\right\} \subset \mathcal{I}'$. Finally, the Lie bracket is perfect so $\mathcal{I}' \mathcal{P} \subset \mathcal{I}'$, and also $\left\{\mathcal{P}, \mathcal{I}'\right\} \subset \mathcal{I} \subset \mathcal{I}'$. Therefore, $\mathcal{I}'$ is a proper ideal, which contradicts the simplicity of $\mathcal{P}$.
\end{proof}

\begin{lemma}\label{ideal}
    Let $(\mathcal{P}, \circ, \left\{\cdot,\cdot\right\})$ be a transposed Poisson algebra and suppose that the associated Lie bracket is perfect, i.e., $\mathcal{P}^2_{\left\{\cdot, \cdot\right\}} =\mathcal{P}$. Then any ideal in the Lie algebra $(\mathcal{P}, \left\{\cdot,\cdot\right\})$ is a transposed quasi-ideal.
\end{lemma}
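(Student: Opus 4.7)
The plan is to verify the two conditions defining a transposed quasi-ideal. The first, $\{\mathcal{P}, \mathcal{I}\} \subset \mathcal{I}$, is immediate because $\mathcal{I}$ is a Lie ideal, so the real work is to establish $\{\mathcal{P}\mathcal{I}, \mathcal{P}\} \subset \mathcal{I}$. Since $\mathcal{P}^2_{\{\cdot,\cdot\}} = \mathcal{P}$, every element of $\mathcal{P}$ is a sum of brackets $\{a, b\}$, so it suffices to show that $\{xi, \{a, b\}\} \in \mathcal{I}$ for all $x, a, b \in \mathcal{P}$ and $i \in \mathcal{I}$. My strategy is to compute this element in two different ways using (\ref{tpaid}) and the Jacobi identity, obtain two expressions that agree modulo $\mathcal{I}$ but differ by a constant factor, and use the comparison to force a residual product into $\mathcal{I}$.

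For the \emph{short computation}, a direct application of the transposed Leibniz rule gives $\{xi, \{a, b\}\} = 2x\{i, \{a, b\}\} - \{i, x\{a, b\}\}$. The second summand is in $\mathcal{I}$ because $\mathcal{I}$ is a Lie ideal, so $\{xi, \{a, b\}\} \equiv 2x\{i, \{a, b\}\} \pmod{\mathcal{I}}$.

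For the \emph{long computation}, first use Jacobi to split $\{xi, \{a, b\}\} = \{\{xi, a\}, b\} + \{a, \{xi, b\}\}$. Apply (\ref{tpaid}) to each inner bracket, writing $\{xi, a\} = 2x\{i, a\} - \{i, xa\}$ and similarly for $b$; the terms involving $\{i, xa\}$ and $\{i, xb\}$ immediately land in $\mathcal{I}$, leaving $2\{x\{i, a\}, b\} - 2\{x\{i, b\}, a\}$ modulo $\mathcal{I}$. A second application of (\ref{tpaid}) to each of these — now exploiting that $\{i, a\}, \{i, b\} \in \mathcal{I}$ so that the corresponding residuals $\{\{i, a\}, xb\}$ and $\{\{i, b\}, xa\}$ lie in $\mathcal{I}$ — reduces the expression to $4x\bigl(\{\{i, a\}, b\} - \{\{i, b\}, a\}\bigr)$ modulo $\mathcal{I}$. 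Finally, Jacobi gives $\{\{i, a\}, b\} - \{\{i, b\}, a\} = \{i, \{a, b\}\}$, so $\{xi, \{a, b\}\} \equiv 4x\{i, \{a, b\}\} \pmod{\mathcal{I}}$.

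Comparing the two computations yields $2x\{i, \{a, b\}\} \equiv 4x\{i, \{a, b\}\} \pmod{\mathcal{I}}$, hence $2x\{i, \{a, b\}\} \in \mathcal{I}$, and since $\mathrm{char}\,\mathbb{F} \neq 2$ we conclude $x\{i, \{a, b\}\} \in \mathcal{I}$. Plugging this back into the short computation gives $\{xi, \{a, b\}\} \in \mathcal{I}$, which is exactly what we needed. The main obstacle I anticipate is psychological rather than technical: a single use of (\ref{tpaid}) stalls because it leaves a residual $x\{i, y\}$ sitting in $\mathcal{P}\mathcal{I}$ and not visibly in $\mathcal{I}$; the trick is to exploit Lie-perfectness of $\mathcal{P}$ to obtain an \emph{alternative longer} reduction of the same element, so that comparing the two reductions cancels the troublesome factor modulo $\mathcal{I}$.
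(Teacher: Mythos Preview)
Your argument is correct. The two computations check out: the short one gives $\{xi,\{a,b\}\}\equiv 2x\{i,\{a,b\}\}\pmod{\mathcal I}$, the long one gives $\{xi,\{a,b\}\}\equiv 4x\{i,\{a,b\}\}\pmod{\mathcal I}$, and comparing forces $x\{i,\{a,b\}\}\in\mathcal I$, which closes the loop.

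Your route is genuinely different from the paper's, though. The paper quotes the cyclic identity
\[
\{hx,\{y,z\}\}+\{hy,\{z,x\}\}+\{hz,\{x,y\}\}=0
\]
(established in \cite{tpa}, equation~(\ref{propeq3}) here) and applies it in one line: with $x\in\mathcal I$, the last two summands lie in $\{\mathcal P,\{\mathcal P,\mathcal I\}\}\subset\mathcal I$, so the first does too. Your approach avoids importing that identity and instead works directly from the transposed Leibniz rule and Jacobi, using the perfectness hypothesis twice (once to write the outer argument as a bracket, and implicitly again in that the comparison trick only produces information about $x\{i,\{a,b\}\}$). The trade-off is that the paper's proof is a one-liner once (\ref{propeq3}) is in hand, whereas yours is self-contained from the axioms but longer; conceptually, your ``compare two reductions'' manoeuvre is in effect re-deriving the content of (\ref{propeq3}) on the fly.
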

\begin{proof}
    Suppose that $\mathcal{I}$ is an ideal of the associated Lie bracket of a transposed Poisson algebra $\mathcal{P}$. By Theorem 2.5 in \cite{tpa}, we have the identity $\left\{hx, \left\{y, z\right\}\right\} = - \left\{hy, \left\{z, x\right\}\right\} - \left\{hz, \left\{x, y\right\}\right\}$, so we can write
    $$\left\{\mathcal{P}\mathcal{I},\mathcal{P}\right\} = \left\{\mathcal{P}\mathcal{I}, \left\{\mathcal{P}, \mathcal{P}\right\}\right\} \subset  \left\{\mathcal{P} \mathcal{P}, \left\{\mathcal{P}, \mathcal{I}\right\}\right\} + \left\{\mathcal{P}\mathcal{P}, \left\{\mathcal{I}, \mathcal{P}\right\}\right\} \subset \mathcal{I}.$$
    Hence, $\mathcal{I}$ is a transposed quasi-ideal.
\end{proof}

As a consequence of the previous two lemmas, we have the following result that shows that a transposed Poisson algebra is simple if and only the Lie bracket is simple.

\begin{theorem}\label{main1}
    Any simple transposed Poisson algebra has simple Lie bracket. 
\end{theorem}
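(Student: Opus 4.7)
The plan is to assemble the theorem directly from Lemmas~\ref{quasi} and~\ref{ideal}. Let $(\mathcal{P}, \circ, \{\cdot, \cdot\})$ be a simple transposed Poisson algebra. First I would invoke the observation made in the paragraph just before Lemma~\ref{quasi}: if the Lie bracket were not perfect, then $\mathcal{P}^2_{\{\cdot,\cdot\}}$ would be a proper ideal of $\mathcal{P}$ (it is closed under the bracket by the Jacobi identity, and by the transposed Leibniz rule~\eqref{tpaid} it is also closed under the commutative product), contradicting simplicity. Hence the Lie bracket is perfect, so the hypothesis of Lemma~\ref{ideal} is automatically satisfied.

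Next, I would argue by contradiction. Suppose $\mathcal{J}$ is a proper non-zero ideal of the Lie algebra $(\mathcal{P}, \{\cdot, \cdot\})$. By Lemma~\ref{ideal}, $\mathcal{J}$ is a transposed quasi-ideal of $\mathcal{P}$. But by Lemma~\ref{quasi}, a simple transposed Poisson algebra contains no transposed quasi-ideals, a contradiction. Therefore $(\mathcal{P}, \{\cdot, \cdot\})$ admits no proper non-zero ideals, and combined with the standing assumption of the section that the Lie bracket is non-abelian, this shows it is simple.

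I do not expect any real obstacle in this argument, since the two preceding lemmas were engineered precisely for this deduction: the notion of transposed quasi-ideal acts as the bridge that translates Lie-ideal information (via Lemma~\ref{ideal}) into obstructions to transposed-Poisson simplicity (via Lemma~\ref{quasi}). The only point worth flagging is why the perfectness hypothesis required by Lemma~\ref{ideal} is available for free, and this is supplied by the preliminary observation recalled above.
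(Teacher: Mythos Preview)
Your proposal is correct and follows exactly the same route as the paper's proof: first note that simplicity forces the Lie bracket to be perfect, then apply Lemma~\ref{ideal} to turn any Lie ideal into a transposed quasi-ideal, and finish with Lemma~\ref{quasi}. There is nothing to add.
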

\begin{proof}
    Suppose $\mathcal{P}$ is a simple transposed Poisson algebra, then the Lie bracket $(\mathcal{P}, \left\{\cdot, \cdot\right\})$ is perfect. Now, if we suppose that $(\mathcal{P}, \left\{\cdot, \cdot\right\})$ is not simple, then any ideal is a quasi-ideal by Lemma \ref{ideal}, but this contradicts Lemma \ref{quasi}, because $\mathcal{P}$ is simple. Therefore, the Lie bracket must be simple.
\end{proof}

The classification of simple finite-dimensional transposed Poisson algebras follows. 

\begin{theorem}\label{clasi1}
    Suppose that $\mathbb{F}$ is algebraically closed and $\textrm{char}(\mathbb{F}) = 0$, then any simple finite-dimensional transposed Poisson algebra is trivial. 
\end{theorem}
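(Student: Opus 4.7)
The plan is to combine Theorem~\ref{main1} with the description of $\frac{1}{2}$-derivations of simple Lie algebras to force the associative commutative multiplication to vanish. First, I would invoke Theorem~\ref{main1}: if $\mathcal{P}$ is a simple transposed Poisson algebra, then the associated Lie algebra $(\mathcal{P}, \{\cdot,\cdot\})$ is simple. Since $\mathcal{P}$ is finite-dimensional over an algebraically closed field of characteristic zero, this Lie algebra is one of the classical finite-dimensional simple Lie algebras.

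Next, I would use the interpretation of the transposed Leibniz rule (\ref{tpaid}) recalled in the introduction: the rule is equivalent to the statement that each left multiplication operator $L_a\colon x\mapsto a\circ x$ is a $\tfrac{1}{2}$-derivation of the Lie bracket $\{\cdot,\cdot\}$. By Filippov's classification of $\tfrac{1}{2}$-derivations of simple Lie algebras in characteristic zero (\cite{FKL, fili98}), every such $\tfrac{1}{2}$-derivation is a scalar multiple of the identity. Hence there is a linear functional $\varphi\colon \mathcal{P}\to\mathbb{F}$ such that $L_a=\varphi(a)\,\mathrm{id}$ for every $a\in\mathcal{P}$; equivalently, $a\circ b=\varphi(a)\,b$ for all $a,b\in\mathcal{P}$.

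Finally, I would exploit commutativity of $\circ$: from $a\circ b=b\circ a$ we get $\varphi(a)\,b=\varphi(b)\,a$ for all $a,b\in\mathcal{P}$. Because the associated Lie bracket is (simple and) non-abelian, $\dim\mathcal{P}\geq 2$, so choosing $a$ and $b$ linearly independent forces $\varphi(a)=\varphi(b)=0$. Therefore $\varphi\equiv 0$ and $\circ\equiv 0$, which is precisely the trivial transposed Poisson structure sitting on top of a simple Lie algebra.

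The main ingredient (and the only non-routine step) is the invocation of Filippov's result on $\tfrac{1}{2}$-derivations of simple Lie algebras; the remaining deductions from Theorem~\ref{main1} and commutativity of $\circ$ are short and direct. No genuine obstacle is expected beyond citing that classification accurately.
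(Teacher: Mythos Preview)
Your proof is correct and follows essentially the same approach as the paper: invoke Theorem~\ref{main1} to reduce to a simple Lie bracket, then use Filippov's result that $\tfrac{1}{2}$-derivations of finite-dimensional simple Lie algebras over an algebraically closed field of characteristic zero are scalar, forcing the commutative product to be trivial. The only difference is that the paper cites \cite[Theorem~7]{FKL} for the last step, whereas you spell out directly the short argument that $L_a=\varphi(a)\,\mathrm{id}$ together with commutativity and $\dim\mathcal{P}\geq 2$ forces $\varphi\equiv 0$; this is a minor expository choice, not a different method.
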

\begin{proof}
    By Theorem \ref{main1}, any simple transposed Poisson algebra is defined on a simple Lie bracket and if $\mathcal{P}$ is finite dimensional, then any $\frac{1}{2}$-derivation is trivial \cite{fili98} (i.e. a multiplication by a field element), so any transposed Poisson algebra has trivial associative commutative multiplication (see \cite[Theorem 7]{FKL}).
\end{proof}

We can conclude the following about a finite-dimensional transposed Poisson algebra $\mathcal{P}$ over an  algebraically closed field of characteristic zero, by looking at its Lie bracket. There are four possibilities summarized below.

\begin{enumerate}
    \item If the Lie bracket is simple, then the associative commutative multiplication is trivial.

    \item If the Lie bracket is perfect and non-simple, then there exist a transposed quasi-ideal, and, therefore, an ideal. We will study this case in the next section and prove that the associative part must be nilpotent.
    
    \item If the Lie bracket is non-perfect and non-abelian, then $\mathcal{P}^2_{\left\{\cdot, \cdot\right\}}$ is an ideal of $\mathcal{P}$.

    \item If the Lie bracket is abelian, we have an associative commutative algebra. Obviously, the transposed Poisson algebra is simple if the associative commutative multiplication is simple.
\end{enumerate}

\begin{remark}
    Observe that there are simple infinite dimensional Lie algebras that admit a non-trivial transposed Poisson structure over the complex field, such as the Witt algebra (simple), with the algebra of Laurent polynomials (non-simple).
\end{remark}

The following example shows that there are non-trivial simple finite-dimensional transposed Poisson algebras over fields of prime characteristic.

\begin{remark}\label{example}
    Consider the special linear Lie algebra $\mathfrak{sl}_2(\mathbb{F})$, where $\mathbb{F}$ is a field of characteristic three. For some basis $e_1, e_2, e_3$, the algebra $\mathfrak{sl}_2(\mathbb{F})$ is given by the products
    $$\left\{e_1, e_2\right\} = e_3, \quad \left\{e_3, e_2\right\} = - 2 e_2, \quad \left\{e_3, e_1\right\} = 2 e_1.$$
    In characteristic three, it has non-trivial $\frac{1}{2}$-derivations. In fact, a $\frac{1}{2}$-derivation $D$ of $\mathfrak{sl}_2(\mathbb{F})$ is of the form
    $$D(e_1)=\lambda e_1 + \mu e_2, \quad D(e_2)=\nu e_1 + \lambda e_2, \quad D(e_3)= \lambda e_3,$$
     for arbitrary scalars $\lambda, \mu, \nu \in \mathbb{F}$. Following the arguments of \cite{bfk23}, one can construct the transposed Poisson structures on it, and they are precisely of the form 
     $${\mathcal{P}}^{\alpha, \beta}:\left\{ 
\begin{tabular}{l}
$ e_1 \circ e_1 = \alpha e_2, \quad e_2 \circ e_2 = \beta e_1,$ \\ 
$\left\{e_1, e_2\right\} = e_3, \quad \left\{e_3, e_2\right\} = - 2 e_2, \quad \left\{e_3, e_1\right\} = 2 e_1,$
\end{tabular}%
\right. $$
where $\alpha, \beta \in \mathbb{F}$ are such that $\alpha\beta =0$. Lastly, note that it is not Poisson and that since $\mathfrak{sl}_2(\mathbb{F})$ is simple, ${\mathcal{P}}^{\alpha, \beta}$ is also simple. 
\end{remark}

Another consequence of Theorem \ref{main1} is the next corollary.
 
\begin{corollary}
     A non-trivial transposed Poisson structure defined on a simple associative commutative algebra { (if it exists)} has simple associated Lie bracket.
\end{corollary}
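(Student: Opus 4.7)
The plan is to reduce the statement directly to Theorem~\ref{main1} by first showing that, under the hypotheses, the transposed Poisson algebra $\mathcal{P}$ itself must be simple. The argument is a short one, since the definition of an ideal in a transposed Poisson algebra automatically packages together an ideal condition for the associative commutative product.

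\smallskip
\noindent\textbf{Step 1: An ideal of the transposed Poisson structure is an ideal of $(\mathcal{P},\circ)$.} Let $\mathcal{I}$ be a proper ideal of $(\mathcal{P}, \circ, \{\cdot,\cdot\})$. By definition $\mathcal{I}\mathcal{P}\subset\mathcal{I}$, so $\mathcal{I}$ is a proper (two-sided, since $\circ$ is commutative) ideal of the associative commutative algebra $(\mathcal{P},\circ)$.

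\smallskip
\noindent\textbf{Step 2: Use simplicity of $(\mathcal{P},\circ)$.} Since $(\mathcal{P},\circ)$ is simple by hypothesis, the only proper ideal is $0$, hence $\mathcal{I}=0$. Therefore $\mathcal{P}$ contains no nonzero proper transposed Poisson ideals, i.e., $\mathcal{P}$ is a simple transposed Poisson algebra.

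\smallskip
\noindent\textbf{Step 3: Apply Theorem~\ref{main1}.} The hypothesis that the transposed Poisson structure is \emph{non-trivial} is read, in accordance with the running convention of Section~\ref{sec2}, as the Lie bracket being non-abelian (if the bracket were abelian, speaking of simplicity of the Lie bracket would be vacuous). Under this convention Theorem~\ref{main1} applies and yields that $(\mathcal{P},\{\cdot,\cdot\})$ is simple.

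\smallskip
There is no real obstacle here; the only thing worth flagging is the interpretation of ``non-trivial''. Since the whole section operates under the standing hypothesis that $\{\cdot,\cdot\}$ is not abelian, the cleanest presentation is to invoke that convention explicitly and then let Steps~1--3 close the argument.
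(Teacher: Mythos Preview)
Your argument is correct and is exactly the (unstated) proof the paper has in mind: the corollary is presented without proof as an immediate consequence of Theorem~\ref{main1}, and your Steps~1--3 spell out precisely that deduction. Your reading of ``non-trivial'' as ``Lie bracket non-abelian'' is also the right one, consistent with the standing convention of Section~\ref{sec2}.
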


Note that there are no known examples of transposed Poisson algebras in which both multiplications are simple. 

\begin{theorem}\label{dsum}
        Let $(\mathcal{P}, \circ, \left\{\cdot,\cdot\right\})$ be a transposed Poisson algebra. The associated Lie algebra $(\mathcal{P}, \left\{\cdot, \cdot\right\})$ is a direct sum of simple ideals if and only if the transposed Poisson algebra $\mathcal{P}$ is a direct sum of simple ideals. 
\end{theorem}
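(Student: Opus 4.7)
The plan is to prove the two directions separately, with the forward direction (simple Lie decomposition implies simple transposed Poisson decomposition) being the substantive one.

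The backward direction is essentially an immediate consequence of Theorem \ref{main1}. If $\mathcal{P}=\bigoplus_{i}\mathcal{P}_{i}$ is a direct sum of simple transposed Poisson ideals, then each $\mathcal{P}_{i}$ is in particular a Lie ideal and, by Theorem \ref{main1}, each $(\mathcal{P}_{i},\left\{\cdot,\cdot\right\})$ is a simple Lie algebra. Thus the Lie algebra $(\mathcal{P},\left\{\cdot,\cdot\right\})$ decomposes as the direct sum of the simple Lie ideals $\mathcal{P}_{i}$.

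For the forward direction, assume $(\mathcal{P},\left\{\cdot,\cdot\right\})=\bigoplus_{i}\mathfrak{L}_{i}$ is a direct sum of simple Lie ideals. The Lie bracket is perfect, so Lemma \ref{ideal} already gives that each $\mathfrak{L}_{i}$ is a transposed quasi-ideal. The key missing piece is the closure $\mathcal{P}\,\mathfrak{L}_{i}\subset \mathfrak{L}_{i}$, and this is the main step I would focus on. Given $x\in \mathcal{P}$ and $y\in \mathfrak{L}_{i}$, decompose $xy=\sum_{k} u_{k}$ with $u_{k}\in \mathfrak{L}_{k}$. For any $k\neq i$ and any $z\in \mathfrak{L}_{k}$ one has $\left\{y,z\right\}=0$, so the transposed Leibniz rule (\ref{tpaid}) gives $\left\{xy,z\right\}+\left\{y,xz\right\}=0$. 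Now $\left\{xy,z\right\}=\left\{u_{k},z\right\}\in \mathfrak{L}_{k}$, because the other components of $xy$ commute with $\mathfrak{L}_{k}$; similarly, only the $\mathfrak{L}_{i}$-component of $xz$ brackets non-trivially with $y$, so $\left\{y,xz\right\}\in \mathfrak{L}_{i}$. Both sides therefore lie in $\mathfrak{L}_{k}\cap \mathfrak{L}_{i}=0$, which forces $\left\{u_{k},z\right\}=0$ for every $z\in \mathfrak{L}_{k}$. Since $\mathfrak{L}_{k}$ is simple (and non-abelian, as per the standing convention of the section), its center is trivial, hence $u_{k}=0$. Thus $xy\in \mathfrak{L}_{i}$.

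It follows that each $\mathfrak{L}_{i}$ is an ideal of the transposed Poisson algebra and $\mathcal{P}=\bigoplus_{i}\mathfrak{L}_{i}$ as a transposed Poisson algebra. To conclude simplicity, observe that any transposed Poisson ideal of $\mathfrak{L}_{i}$ is in particular a Lie ideal, and hence must be trivial because $\mathfrak{L}_{i}$ is a simple Lie algebra. The main obstacle is precisely the closure argument in the previous paragraph; the rest reduces to combining Lemma \ref{ideal}, Theorem \ref{main1}, and the centerlessness of simple Lie algebras.
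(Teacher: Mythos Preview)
Your proof is correct, and the backward direction matches the paper exactly. For the forward direction, however, you take a genuinely different route from the paper. The paper reuses the machinery of Lemma~\ref{quasi}: it considers the maximal subspace $\mathcal{I}'_r$ with $\{\mathcal{P},\mathcal{I}'_r\}\subset\mathcal{I}_r$, argues that $\mathcal{I}'_r=\mathcal{I}_r$ (because $\{\mathcal{P},\mathcal{I}_k\}=\mathcal{I}_k$ for each $k$), and then repeats the argument of Lemma~\ref{quasi}, which relies on the identity $x\{y,z\}+y\{z,x\}+z\{x,y\}=0$ together with perfectness, to conclude $\mathcal{I}_r\mathcal{P}\subset\mathcal{I}_r$. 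Your closure argument is more direct and self-contained: you apply only the transposed Leibniz rule (\ref{tpaid}) to $\{y,z\}=0$, project onto components, and invoke centerlessness of the simple summand $\mathfrak{L}_k$ to kill the off-diagonal piece $u_k$. This bypasses both the auxiliary identity and the $\mathcal{I}'$-construction entirely; in particular, your reference to Lemma~\ref{ideal} is actually unnecessary, since your argument never uses the transposed quasi-ideal property. The paper's approach has the advantage of recycling existing lemmas; yours is shorter and requires less preparation.
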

\begin{proof}
    Suppose the Lie algebra $(\mathcal{P}, \left\{\cdot, \cdot\right\})$ is a direct sum of the simple ideals $\mathcal{I}_r$ for $r$ in some set of indices $I$, then it is perfect.  Also, each of these ideals is a quasi-ideal, by Lemma \ref{ideal}.  Now, observe that the maximal subspace $\mathcal{I}'_r$ such that $\left\{\mathcal{P}, \mathcal{I}'_r\right\} \subset \mathcal{I}_r$, from the proof of Lemma \ref{quasi}, is $\mathcal{I}'_r= \mathcal{I}_r$, because $\left\{\mathcal{P}, \mathcal{I}_k\right\} = \mathcal{I}_k$ for $k\in I$. Hence, by a similar argument, it follows that $\mathcal{I}_r\mathcal{P}\subset\mathcal{I}_r$, so every $\mathcal{I}_r$ is a ideal of the transposed Poisson algebra $\mathcal{P}$. Finally, since the ideals $\mathcal{I}_r$ are simple as Lie algebras, they are simple as transposed Poisson algebras.
    Conversely, it is clear that if the transposed Poisson algebra $\mathcal{P}$ is a direct sum of ideals, then the associated Lie algebra is also a direct sum of these ideals. Now, since they are simple as transposed Poisson algebras, their Lie bracket must be simple, by Theorem \ref{main1}.
\end{proof}

In an algebraically closed field of characteristic zero, finite-dimensional semisimple (radical is zero) Lie algebras are precisely those that can be written as a direct sum of simple ideals. Also, semisimple Lie algebras have no non-trivial $\frac{1}{2}$-derivations \cite{FKL}. Hence, they have no non-trivial transposed Poisson structures. By the previous theorem, we have a more general result than Theorem \ref{clasi1}.

\begin{theorem}
    Suppose that $\mathbb{F}$ is algebraically closed and $\textrm{char}(\mathbb{F}) = 0$. Let $(\mathcal{P}, \circ, \left\{\cdot,\cdot\right\})$ be a finite-dimensional transposed Poisson algebra. If the transposed Poisson algebra $\mathcal{P}$ is a direct sum of simple ideals, then it is trivial.
\end{theorem}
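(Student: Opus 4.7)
The plan is to reduce to the simple case already treated in Theorem \ref{clasi1} by applying Theorem \ref{dsum}. The hypothesis is that $\mathcal{P}$ decomposes as a direct sum of simple transposed Poisson ideals, so Theorem \ref{dsum} immediately transfers this decomposition to the associated Lie algebra: $(\mathcal{P}, \{\cdot,\cdot\})$ itself decomposes as a direct sum of the very same ideals, each simple as a Lie algebra. Since $\mathbb{F}$ is algebraically closed of characteristic zero and $\mathcal{P}$ is finite-dimensional, this means $(\mathcal{P}, \{\cdot,\cdot\})$ is a finite-dimensional semisimple Lie algebra.

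Next, I would invoke the result of \cite{FKL, fili98}, already cited in the paragraph preceding the theorem, that every $\frac{1}{2}$-derivation of a finite-dimensional semisimple Lie algebra over an algebraically closed field of characteristic zero is trivial, i.e.\ a scalar multiple of the identity. The transposed Leibniz rule (\ref{tpaid}) says that for each $a\in\mathcal{P}$ the left multiplication operator $L_a\colon y \mapsto a\circ y$ is a $\frac{1}{2}$-derivation of the Lie bracket; hence every $L_a$ is scalar. Combined with the commutativity of $\circ$, via \cite[Theorem 7]{FKL} exactly as in the proof of Theorem \ref{clasi1}, this forces the associative commutative multiplication to vanish identically, so $\mathcal{P}$ is trivial as a transposed Poisson algebra.

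I do not anticipate any real obstacle here: the substantive content has been absorbed into Theorem \ref{dsum} on one side and into the cited classification of $\frac{1}{2}$-derivations on the other. The present statement is essentially a two-step reduction that extends Theorem \ref{clasi1} from a single simple ideal to an arbitrary direct sum of them, and the argument is entirely parallel to the one already given there.
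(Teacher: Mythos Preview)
Your proposal is correct and follows essentially the same route as the paper: apply Theorem \ref{dsum} to pass the direct-sum decomposition to the Lie bracket, observe that the resulting Lie algebra is semisimple, and then invoke the cited fact that semisimple Lie algebras admit only trivial $\frac{1}{2}$-derivations to conclude that $\circ$ vanishes. The paper presents exactly this reasoning in the paragraph immediately preceding the theorem, so no separate proof is given there.
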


\section{Transposed Poisson algebras with a perfect multiplication}

Suppose $\mathbb{F}$ is an algebraically closed field of characteristic zero in this section. We have proved that there are no non-trivial simple finite-dimensional transposed Poisson algebras. In this section, we study how one of the multiplications being perfect affects the other one. Let us recall a useful result about generalized derivations of Lie algebras.

A generalized derivation of a Lie algebra $(\mathcal{L}, \left\{\cdot,\cdot\right\})$ is a linear map $D: \mathcal{L}\rightarrow \mathcal{L}$ such that there exists two additional linear maps $D', D'': \mathcal{L}\rightarrow \mathcal{L}$ such that 
$$\left\{D(x), y\right\} + \left\{x, D'(y)\right\} = D''(\left\{x, y\right\}).$$
Leger and Luks proved that the generalized derivations of a finite-dimensional Lie algebra preserve the radical of the algebra \cite[Theorem 6.4]{leger}.  Observe that $\frac{1}{2}$-derivations are generalized derivations, and recall that the left multiplication of the associative commutative operation in any transposed Poisson algebra constructed on $\mathcal{L}$ is a $\frac{1}{2}$-derivation of $\mathcal{L}$. Let us denote the descending derived series of an ideal as  $\mathcal{I}^{(0)} = \mathcal{I}$ and $\mathcal{I}^{(k)} = \left\{\mathcal{I}^{(k-1)}, \mathcal{I}^{(k-1)} \right\}$ for $k\geq 1$.
The next result follows. 

\begin{theorem}
    Let $(\mathcal{P}, \circ, \left\{\cdot,\cdot\right\})$ be a finite-dimensional transposed Poisson algebra. Denote by $\mathcal{R}$ the radical of the associated Lie algebra $(\mathcal{P}, \left\{\cdot,\cdot\right\})$, then $\mathcal{R}^{(i)}$ is a (not necessarily proper) ideal of the transposed Poisson algebra $\mathcal{P}$, for $i\geq 0$. 
\end{theorem}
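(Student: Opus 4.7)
The plan is to combine the Leger--Luks theorem (invoked in the paragraph preceding the statement) with a double induction on $i$ using the transposed Leibniz identity.

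First I would settle the base case $i=0$. Since every left multiplication $L_x: y\mapsto x\circ y$ is a $\frac{1}{2}$-derivation of $(\mathcal{P}, \{\cdot,\cdot\})$, it is in particular a generalized derivation, and by \cite[Theorem 6.4]{leger} it preserves the radical $\mathcal{R}$. Hence $\mathcal{P}\circ \mathcal{R}\subset \mathcal{R}$, and since $\mathcal{R}$ is a Lie ideal by definition, it is already an ideal of the transposed Poisson algebra.

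Next I would prove, by induction on $i\geq 0$, that $\mathcal{R}^{(i)}$ is a Lie ideal of $\mathcal{P}$. Assuming $\{\mathcal{P}, \mathcal{R}^{(i-1)}\}\subset \mathcal{R}^{(i-1)}$, the Jacobi identity gives
\begin{equation*}
\{\mathcal{P}, \mathcal{R}^{(i)}\}=\{\mathcal{P},\{\mathcal{R}^{(i-1)},\mathcal{R}^{(i-1)}\}\}\subset\{\{\mathcal{P},\mathcal{R}^{(i-1)}\},\mathcal{R}^{(i-1)}\}+\{\mathcal{R}^{(i-1)},\{\mathcal{P},\mathcal{R}^{(i-1)}\}\}\subset \mathcal{R}^{(i)},
\end{equation*}
which is the standard fact that derived powers of a Lie ideal are again Lie ideals.

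Then, using the same inductive scheme, I would prove $\mathcal{P}\circ \mathcal{R}^{(i)}\subset \mathcal{R}^{(i)}$. The base case $i=0$ was handled above. For the inductive step, assume $\mathcal{P}\circ \mathcal{R}^{(i-1)}\subset \mathcal{R}^{(i-1)}$. Since $\mathcal{R}^{(i)}$ is the linear span of brackets $\{y,z\}$ with $y,z\in \mathcal{R}^{(i-1)}$, it suffices to check that $x\circ\{y,z\}\in \mathcal{R}^{(i)}$ for $x\in\mathcal{P}$ and such $y,z$. The transposed Leibniz identity (\ref{tpaid}) yields
\begin{equation*}
2\,x\circ\{y,z\}=\{x\circ y, z\}+\{y, x\circ z\},
\end{equation*}
and by the inductive hypothesis $x\circ y, x\circ z\in \mathcal{R}^{(i-1)}$, so the right-hand side lies in $\{\mathcal{R}^{(i-1)},\mathcal{R}^{(i-1)}\}=\mathcal{R}^{(i)}$. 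Since the characteristic is not two, this gives $x\circ\{y,z\}\in\mathcal{R}^{(i)}$, completing the induction. Combined with the previous paragraph, each $\mathcal{R}^{(i)}$ is both a Lie ideal and an associative ideal of $\mathcal{P}$, i.e.\ a transposed Poisson ideal.

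No step here presents a real obstacle: the single nontrivial input is the Leger--Luks theorem guaranteeing that $\frac{1}{2}$-derivations preserve the radical; everything else is an inductive use of the transposed Leibniz identity together with the Jacobi identity. The only mild subtlety is remembering that the statement allows $\mathcal{R}^{(i)}$ to be the whole of $\mathcal{P}$ (the "not necessarily proper" clause), which occurs, for instance, when $\mathcal{R}=\mathcal{P}$ and the Lie algebra is itself solvable.
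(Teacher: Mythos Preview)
Your proof is correct and follows essentially the same approach as the paper: the Leger--Luks theorem handles the base case $\mathcal{R}^{(0)}=\mathcal{R}$, and the inductive step uses the transposed Leibniz rule for the $\circ$-ideal condition and the Jacobi identity for the Lie-ideal condition. The paper packages the inductive step as the single observation that for any transposed Poisson ideal $\mathcal{I}$ the derived subspace $\{\mathcal{I},\mathcal{I}\}$ is again a transposed Poisson ideal, but the computations are identical to yours.
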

\begin{proof}
    The radical $\mathcal{R}$ of the associated Lie algebra is an ideal of $\mathcal{P}$, as we have shown above. Now, for the second part, we note that for an arbitrary ideal $\mathcal{I}$ of a transposed Poisson algebra $\mathcal{P}$ we have that
    $$ \mathcal{P} \left\{\mathcal{I}, \mathcal{I}\right\}\subset  \left\{\mathcal{P}\mathcal{I}, \mathcal{I}\right\} +\left\{\mathcal{I}, \mathcal{P}\mathcal{I}\right\} \subset  \left\{\mathcal{I}, \mathcal{I}\right\}, \quad \left\{\mathcal{P}, \left\{\mathcal{I}, \mathcal{I}\right\}\right\}\subset \left\{\mathcal{I}, \left\{\mathcal{P}, \mathcal{I}\right\}\right\} + \left\{\mathcal{I}, \left\{\mathcal{I}, \mathcal{P}\right\}\right\}\subset  \left\{\mathcal{I}, \mathcal{I}\right\}.$$
    Hence, the subspace $\left\{\mathcal{I}, \mathcal{I}\right\}$ is an ideal of $\mathcal{P}$. In particular, we conclude that $\mathcal{R}^{(i)}$  is an ideal of $\mathcal{P}$.
\end{proof}

Observe that the previous theorem does not hold in prime characteristic and a counterexample can be found in Remark \ref{example}. 

\begin{corollary}
        Let $(\mathcal{P}, \circ, \left\{\cdot,\cdot\right\})$ be a finite-dimensional transposed Poisson algebra. Then $\mathcal{P}\mathcal{P}\subset \mathcal{R}$ as subespaces, where $\mathcal{R}$ is the radical of the associated Lie algebra.
\end{corollary}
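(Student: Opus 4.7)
The plan is to reduce to the semisimple quotient $\mathcal{P}/\mathcal{R}$ and apply the triviality result already established for direct sums of simple transposed Poisson algebras. First, the preceding theorem (at $i=0$) tells us that $\mathcal{R}$ is an ideal of the transposed Poisson algebra $\mathcal{P}$, so the quotient $\bar{\mathcal{P}} := \mathcal{P}/\mathcal{R}$ inherits the structure of a finite-dimensional transposed Poisson algebra whose associated Lie algebra is $(\mathcal{P}/\mathcal{R}, \{\cdot,\cdot\})$.

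Next, since $\mathbb{F}$ is algebraically closed of characteristic zero, the Lie algebra $\bar{\mathcal{P}}$ is by definition semisimple, and thus decomposes as a direct sum of simple Lie ideals. By Theorem \ref{dsum}, this forces $\bar{\mathcal{P}}$ to be a direct sum of simple ideals as a transposed Poisson algebra. Applying the theorem immediately following Theorem \ref{dsum}, every such finite-dimensional transposed Poisson algebra over $\mathbb{F}$ is trivial, meaning its associative commutative multiplication is identically zero. Therefore $\bar{\mathcal{P}}\,\bar{\mathcal{P}} = 0$, which is exactly the statement $\mathcal{P}\mathcal{P} \subset \mathcal{R}$.

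There is no real obstacle: once the preceding theorem is in hand (to guarantee $\mathcal{R}$ is a transposed Poisson ideal and the quotient is well-defined), the argument is just a two-line chain of citations to Theorem \ref{dsum} and the classification corollary for semisimple direct sums. The only point worth checking carefully is that the Lie bracket on $\bar{\mathcal{P}}$ is genuinely semisimple, but this is immediate from the standard definition of the radical over an algebraically closed field of characteristic zero.
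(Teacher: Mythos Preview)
Your proof is correct and follows essentially the same route as the paper: pass to the quotient $\mathcal{P}/\mathcal{R}$, observe that its Lie bracket is semisimple, and conclude that its associative multiplication vanishes. The paper is slightly terser---it handles the case $\mathcal{R}=\mathcal{P}$ separately and then invokes the triviality of transposed Poisson structures on semisimple Lie algebras directly---whereas you unpack that implication via Theorem~\ref{dsum} and its subsequent corollary, but the substance is identical.
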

\begin{proof}
    If $\mathcal{R}= \mathcal{P}$, this is clear. Suppose $\mathcal{R}\neq \mathcal{P}$, then the algebra $(\mathcal{P}/\mathcal{R}, \left\{\cdot,\cdot\right\})$ is semisimple, hence $(\mathcal{P}/\mathcal{R}, \circ, \left\{\cdot,\cdot\right\})$ is trivial and $(\mathcal{P}/\mathcal{R}, \circ)$ is the zero algebra. Therefore, it follows that $\mathcal{P}\mathcal{P}\subset \mathcal{R}$.
\end{proof}

The consequence is the next result about transposed Poisson algebras with perfect associative commutative part.

\begin{corollary}\label{perfectac}
    Let $(\mathcal{P}, \circ, \left\{\cdot,\cdot\right\})$ be a finite-dimensional transposed Poisson algebra such that $\mathcal{P}\mathcal{P} = \mathcal{P}$, then the associated Lie algebra is solvable. In particular, if the associative commutative algebra is unital, then the associated Lie algebra is solvable.
\end{corollary}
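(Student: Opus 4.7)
The plan is to derive this immediately from the preceding corollary, which already gives the crucial containment $\mathcal{P}\mathcal{P}\subset \mathcal{R}$. First I would observe that if $\mathcal{P}\mathcal{P}=\mathcal{P}$, then combining with the inclusion $\mathcal{P}\mathcal{P}\subset\mathcal{R}$ yields $\mathcal{P}\subset\mathcal{R}\subset\mathcal{P}$, so that $\mathcal{R}=\mathcal{P}$. Since the radical of a Lie algebra is by definition its maximal solvable ideal, the equality $\mathcal{R}=\mathcal{P}$ is precisely the statement that $(\mathcal{P},\{\cdot,\cdot\})$ is solvable.

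For the second assertion, I would argue that unitality of $(\mathcal{P},\circ)$ forces perfection of the associative commutative product. Namely, if $e\in\mathcal{P}$ is the unit for $\circ$, then for any $x\in\mathcal{P}$ one has $x=e\circ x\in\mathcal{P}\mathcal{P}$, so $\mathcal{P}\mathcal{P}=\mathcal{P}$, and the first part of the corollary applies to conclude solvability.

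There is no real obstacle here: all the work has been done in establishing the previous corollary, and the present statement is essentially an unpacking of the definition of the radical together with the trivial observation that a unit makes the associative commutative product perfect.
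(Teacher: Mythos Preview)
Your argument is correct and is exactly the one the paper has in mind: the corollary is stated without proof, being introduced as ``the consequence'' of the preceding corollary $\mathcal{P}\mathcal{P}\subset\mathcal{R}$, and your two steps (perfection forces $\mathcal{R}=\mathcal{P}$, and unitality forces perfection) are precisely how that consequence is meant to be read.
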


Note that the example in Remark \ref{examplelie} shows that $\mathcal{P}\mathcal{P}$ is not always an ideal of the Lie part. Also, the next remark shows that the solvability can not be replaced by nilpotency in Corollary \ref{perfectac}.

\begin{remark}
    The complex Lie algebra with a basis $e_1, e_2, e_3$ given by the products $\left\{e_1, e_3\right\}=e_1, \left\{e_2, e_3\right\} = e_2$ is non-nilpotent solvable, and together with the unital associative commutative algebra given by $e_1e_3=e_1, e_2e_3=e_2, e_3e_3 = e_3$, they form a transposed Poisson structure.
\end{remark}

On the other hand, if the Lie part is perfect, we have the following result.

\begin{corollary}
    Let $(\mathcal{P}, \circ, \left\{\cdot,\cdot\right\})$ be a finite-dimensional transposed Poisson algebra such that $\left\{\mathcal{P}, \mathcal{P}\right\} = \mathcal{P}$, then the associative commutative algebra $(\mathcal{P}, \circ)$ is nilpotent.
\end{corollary}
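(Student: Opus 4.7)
The plan is to force the associative powers $\mathcal{P}^k$ down into the lower central series of the Lie radical of $\mathcal{P}$, which is nilpotent precisely because $\mathcal{P}$ is perfect.

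Let $\mathcal{R}$ denote the radical of the Lie algebra $(\mathcal{P},\left\{\cdot,\cdot\right\})$ and define its lower central series by $\mathcal{R}_0 = \mathcal{R}$ and $\mathcal{R}_k = \left\{\mathcal{R},\mathcal{R}_{k-1}\right\}$ for $k\geq 1$. I would first record that, in characteristic zero, the classical inclusion $\left\{\mathcal{P},\mathcal{R}\right\}\subset \mathrm{Nil}(\mathcal{P})$ combined with a Levi decomposition $\mathcal{P}=\mathcal{S}\oplus \mathcal{R}$ and the hypothesis $\left\{\mathcal{P},\mathcal{P}\right\}=\mathcal{P}$ yields $\mathcal{R}=\left\{\mathcal{S},\mathcal{R}\right\}+\left\{\mathcal{R},\mathcal{R}\right\}\subset \left\{\mathcal{P},\mathcal{R}\right\}\subset \mathrm{Nil}(\mathcal{P})$, so $\mathcal{R}$ is itself a nilpotent Lie algebra and $\mathcal{R}_n = 0$ for some $n$. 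A short induction on $k$ then shows that each $\mathcal{R}_k$ is a transposed Poisson ideal of $\mathcal{P}$: the $\circ$-ideal part uses the Leibniz identity $2p\left\{r,s\right\}=\left\{pr,s\right\}+\left\{r,ps\right\}$ and the Lie-ideal part uses the Jacobi identity, both applied together with the inductive hypothesis on $\mathcal{R}_{k-1}$.

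The heart of the argument is the filtration claim $\mathcal{P}^{k+2}\subset \mathcal{R}_k$ for every $k\geq 0$, proved by induction on $k$. The base case $\mathcal{P}^2\subset \mathcal{R}$ is the preceding corollary. The inductive step rests on the following auxiliary inclusion, which I expect to be the main technical obstacle:
\[
\left\{\mathcal{P}\,\mathcal{R}_{k-1},\,\mathcal{P}\right\}\subset \mathcal{R}_k.
\]
I would extract this from the identity $\left\{hx,\left\{y,z\right\}\right\}=-\left\{hy,\left\{z,x\right\}\right\}-\left\{hz,\left\{x,y\right\}\right\}$ of \cite[Theorem 2.5]{tpa}: choosing $x\in \mathcal{R}_{k-1}$ makes both $\left\{z,x\right\}$ and $\left\{x,y\right\}$ land in $\mathcal{R}_{k-1}$ (a Lie ideal), while $hy,hz\in \mathcal{P}^2\subset \mathcal{R}$; hence the right-hand side sits in $\left\{\mathcal{R},\mathcal{R}_{k-1}\right\}=\mathcal{R}_k$, and letting $\left\{y,z\right\}$ range over the spanning set of $\mathcal{P}$ (available because $\left\{\mathcal{P},\mathcal{P}\right\}=\mathcal{P}$) produces the auxiliary inclusion.

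Granting this, the inductive step is short: for $v\in \mathcal{P}^{k+1}\subset \mathcal{R}_{k-1}$ and $p=\left\{a,b\right\}\in \mathcal{P}$, the Leibniz rule gives $pv=\frac{1}{2}(\left\{va,b\right\}+\left\{a,vb\right\})$, and by commutativity of $\circ$ both $va$ and $vb$ belong to $\mathcal{P}\,\mathcal{R}_{k-1}$, so each bracket lies in $\mathcal{R}_k$ and hence $\mathcal{P}^{k+2}\subset \mathcal{R}_k$. Taking $k=n$ with $\mathcal{R}_n=0$ gives $\mathcal{P}^{n+2}=0$, i.e., $(\mathcal{P},\circ)$ is nilpotent.
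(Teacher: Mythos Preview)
Your proof is correct, but it takes a genuinely different route from the paper's.

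The paper filters the associative powers into the \emph{derived series} $\mathcal{R}^{(n)}$ of the radical, proving $\mathcal{P}^{2^n+1}\subset\mathcal{R}^{(n)}$ by a single application of identity~(\ref{propeq5}) in the form $uv\{x,y\}\subset\{u\mathcal{P},v\mathcal{P}\}$ at each step; since $\mathcal{R}$ is automatically solvable, this terminates. You instead filter into the \emph{lower central series} $\mathcal{R}_k$, which forces you to first argue that $\mathcal{R}$ is actually nilpotent (via the classical inclusion $\{\mathcal{P},\mathcal{R}\}\subset\mathrm{Nil}(\mathcal{P})$ together with perfectness), and your key technical step relies on identity~(\ref{propeq3}) rather than~(\ref{propeq5}). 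In exchange for that extra Lie-theoretic input you obtain a linear bound $\mathcal{P}^{n+2}=0$ where $n$ is the nilpotency class of $\mathcal{R}$, which is typically much sharper than the paper's $\mathcal{P}^{2^k+1}=0$ with $k$ the derived length. The paper's argument is a bit more self-contained; yours extracts more structural information.
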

\begin{proof}
    Let us show that $\mathcal{P}^{2^n+1}\subset \mathcal{R}^{(n)}$ for $n\geq1$. We proceed by induction using the equation (\ref{propeq5}). For $n=1$, we have  that $\mathcal{P}^{3} = \mathcal{P}\mathcal{P}\left\{\mathcal{P}, \mathcal{P}\right\} \subset \left\{\mathcal{P}\mathcal{P}, \mathcal{P}\mathcal{P}\right\} \subset \left\{\mathcal{R}, \mathcal{R}\right\} =\mathcal{R}^{(1)} $. For $n>1$, we have 
    $$\mathcal{P}^{2^n+1} = \mathcal{P}^{2^{n-1}}\mathcal{P}^{2^{n-1}}\mathcal{P} = \mathcal{P}^{2^{n-1}}\mathcal{P}^{2^{n-1}}\left\{\mathcal{P}, \mathcal{P}\right\}\subset \left\{\mathcal{P} \mathcal{P}^{2^{n-1}}, \mathcal{P} \mathcal{P}^{2^{n-1}}\right\}\subset \left\{\mathcal{R}^{(n-1)}, \mathcal{R}^{(n-1)}\right\}=\mathcal{R}^{(n)}.$$
    Now, since $\mathcal{R}$ is solvable, there exist some $k$ such that $\mathcal{P}^{2^k+1}\subset \mathcal{R}^{(k)} = 0$. Therefore, $(\mathcal{P}, \circ)$  is nilpotent.
\end{proof}

\section{On the simple transposed Poisson superalgebras}\label{sec3}

Let $\mathbb{F}$ be an arbitrary field of characteristic different from two. In this section, we show that the results in the second section are also valid for superalgebras. Although our  arguments are the same, they deserve a special mention. A transposed Poisson superalgebra is a $\mathbb{Z}_2$-graded vector space $\mathcal{P} = \mathcal{P}_0 \oplus \mathcal{P}_1$ endowed with two multiplications: an associative supercommutative multiplication $-\circ-$ and a Lie superalgebra multiplication $\left\{\cdot, \cdot\right\}$. Recall that the Jacobi superidentity can be written as 
\begin{equation}\label{sjacob}
    (-1)^{|x||z|}\left\{\left\{x, y\right\}, z\right\} + (-1)^{|y||x|}\left\{\left\{y, z\right\}, x\right\} + (-1)^{|z||y|}\left\{\left\{z, x\right\}, y\right\} = 0.
\end{equation}
Additionally, these two operations are required to satisfy the (transposed) Leibniz superidentity:
\begin{equation}\label{supertpaid}
    2 x \circ \left\{y, z\right\} = \left\{x \circ y, z\right\} + (-1)^{|x||y|}\left\{y, x \circ z\right\}, 
\end{equation}
for homogeneous elements $x,y,z\in \mathcal{P}_0 \cup \mathcal{P}_1$. As usual, $|x|$ denotes the parity of $x$. Although, we may write $(-1)^{x}:= (-1)^{|x|}$. For convenience, we will denote the multiplication $\circ$ by concatenation.

\medskip

The identities in \cite[Theorem 2.5]{tpa} can be generalized to the superalgebra case as follows. They can be constructed with the usual rules to construct superidentities from identities and their proof is analogous to the proof for identities in \cite{tpa}.

\begin{proposition}\label{propeqs}
    Let $\mathcal{P} = \mathcal{P}_0 \oplus \mathcal{P}_1$ be a transposed Poisson superalgebra. Then for $x, y, z, h, u, v\in \mathcal{P}_0 \cup \mathcal{P}_1$ we have:
    \begin{equation}\label{propeq1}
            (-1)^{xz} x \left\{ y, z \right\} + (-1)^{yx} y \left\{ z, x \right\} + (-1)^{zy} z \left\{ x, y \right\} = 0.
    \end{equation}
    \begin{equation}\label{propeq2}
        (-1)^{xz} \left\{h \left\{x, y\right\}, z\right\} + (-1)^{yx} \left\{h \left\{y, z\right\}, x\right\} + (-1)^{zy} \left\{h \left\{z, x\right\}, y\right\} = 0.
    \end{equation}
    \begin{equation}\label{propeq3}
        (-1)^{xz} \left\{hx, \left\{y, z\right\}\right\} + (-1)^{yx} \left\{hy, \left\{z, x\right\}\right\} + (-1)^{zy} \left\{hz, \left\{x, y\right\}\right\} = 0.
    \end{equation}
    \begin{equation}\label{propeq4}
        (-1)^{xz} \left\{h, x\right\}\left\{y, z\right\} + (-1)^{yx} \left\{h, y\right\}\left\{z, x\right\} + (-1)^{zy} \left\{h, z\right\}\left\{x, y\right\}= 0.
    \end{equation}
    \begin{equation}\label{propeq5}
        \left\{xu, vy\right\} + (-1)^{uv}\left\{xv, uy\right\} = 2 (-1)^{ux+vx}uv\left\{x, y\right\}.
    \end{equation}
    \begin{equation}\label{propeq6}
        (-1)^{vx + yu}x\left\{u, yv\right\} + (-1)^{vu+vy}v\left\{xy, u\right\} + (-1)^{xu+xy}yu\left\{v, x\right\} = 0.
    \end{equation}
\end{proposition}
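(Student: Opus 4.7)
My plan is to reproduce, in the super setting, the arguments establishing the corresponding non-super identities of \cite[Theorem 2.5]{tpa}, while tracking Koszul signs along the way. The only tools required are the super-Leibniz rule (\ref{supertpaid}), the super-Jacobi identity (\ref{sjacob}), super-commutativity $xy = (-1)^{|x||y|}yx$ of $\circ$, and super-skew-symmetry $\{x,y\} = -(-1)^{|x||y|}\{y,x\}$. Each of the six identities is $\mathbb{Z}_2$-homogeneous in the stated signs, so I can work formally in the free transposed Poisson superalgebra and manipulate signs symbolically, without splitting into cases over the parities of $x,y,z,h,u,v$.

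For (\ref{propeq1}), I would apply (\ref{supertpaid}) to each of $x\{y,z\}$, $y\{z,x\}$, $z\{x,y\}$, weight the three expansions by the prescribed signs $(-1)^{xz}, (-1)^{yx}, (-1)^{zy}$, and check that the six resulting bracket terms pair off and cancel. For instance $(-1)^{xz}\{xy,z\}$ and $(-1)^{yx+yz}\{z,yx\}$ become opposites after using $yx = (-1)^{xy}xy$ together with $\{z,xy\} = -(-1)^{z(x+y)}\{xy,z\}$ and reducing exponents modulo $2$; the remaining two pairs cancel by the same mechanism. This is the super-analogue of the cyclic cancellation in \cite{tpa}.

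Identities (\ref{propeq2}), (\ref{propeq3}) and (\ref{propeq4}) are obtained from (\ref{propeq1}) by further applications of (\ref{supertpaid}) together with the super-Jacobi identity (\ref{sjacob}). Concretely, (\ref{propeq4}) comes from applying $\{h,\cdot\}$ to the cyclic sum in (\ref{propeq1}) and using (\ref{supertpaid}) to move $h$ into each product; (\ref{propeq2}) is then obtained by applying (\ref{supertpaid}) to each summand in (\ref{propeq4}) and invoking (\ref{propeq1}) to kill the unwanted terms; and (\ref{propeq3}) follows from (\ref{propeq2}) via (\ref{sjacob}). For (\ref{propeq5}) I would expand $\{xu,vy\}$ by applying (\ref{supertpaid}) twice, first peeling off $u$ and then $v$, compare with the analogous expansion of $\{xv,uy\}$, and observe that the cross-terms cancel by super-commutativity, leaving exactly $2(-1)^{ux+vx}uv\{x,y\}$. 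Identity (\ref{propeq6}) is then a rearrangement of (\ref{propeq5}) combined with (\ref{propeq1}) after specializing one slot to a product.

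The only genuine difficulty throughout is the combinatorial bookkeeping of the signs arising from each swap and each use of super-skew-symmetry, which I expect to be routine but tedious. No new algebraic ingredient beyond the defining identities of a transposed Poisson superalgebra is needed, and this is exactly what the author means by ``constructed with the usual rules to construct superidentities from identities''.
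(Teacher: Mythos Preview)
Your overall strategy---reproducing the proofs from \cite[Theorem~2.5]{tpa} while tracking Koszul signs---is exactly the paper's approach, and your argument for (\ref{propeq1}) is correct. However, your sketched route through (\ref{propeq2})--(\ref{propeq4}) does not work as written. Applying the bracket $\{h,\cdot\}$ to the cyclic sum in (\ref{propeq1}) yields terms of the form $\{h,\,x\{y,z\}\}$, and the transposed Leibniz rule (\ref{supertpaid}) governs expressions $a\circ\{b,c\}$, not $\{a,\,b\circ c\}$, so there is no direct way to ``move $h$ into the product'' and extract $\{h,x\}\{y,z\}$ from this alone; likewise, applying (\ref{supertpaid}) to a summand $\{h,x\}\{y,z\}$ of (\ref{propeq4}) produces terms of the shape $\{\{h,x\}y,\,z\}$ rather than the $\{h\{x,y\},z\}$ needed for (\ref{propeq2}).

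The derivation in \cite{tpa} (and hence the one the paper has in mind) proceeds in a different order. One first obtains (\ref{propeq2}) by combining the cyclic sum of the Leibniz instances $2h\{\{x,y\},z\}=\{h\{x,y\},z\}+(-1)^{h(x+y)}\{\{x,y\},hz\}$ (whose left-hand sides vanish by super-Jacobi) with the cyclic sum of the super-Jacobi identities for the triples $(x,y,hz)$, $(y,z,hx)$, $(z,x,hy)$, after expanding one term of each via (\ref{supertpaid}); identity (\ref{propeq3}) then falls out of the same computation, and (\ref{propeq4}) is deduced afterwards from (\ref{propeq1}) and (\ref{propeq2}). This is still routine sign bookkeeping, so your broader claim stands, but the specific logical order should follow \cite{tpa} rather than the one you outlined.
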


Recall that a $\mathbb{Z}_2$-graded ideal in a transposed Poisson superalgebra $\mathcal{P} = \mathcal{P}_0 \oplus \mathcal{P}_1$ is a $\mathbb{Z}_2$-graded vector space $\mathcal{I} = \mathcal{I}_0 \oplus \mathcal{I}_1$ such that $\mathcal{I}\mathcal{P} \subset \mathcal{I}$ and $\left\{\mathcal{I}, \mathcal{P}\right\}\subset \mathcal{I}$. A superalgebra is simple if it contains no graded ideals. If the Lie superalgebra $(\mathcal{P}, \left\{\cdot, \cdot\right\})$ associated to a transposed Poisson superalgebra is not perfect, then by the Leibniz identity (\ref{supertpaid}), $\mathcal{P}^2_{\left\{\cdot, \cdot\right\}}$ is a graded ideal of $\mathcal{P}$. Indeed, we have
$$\mathcal{P} \mathcal{P}^2_{\left\{\cdot, \cdot\right\}}= (\mathcal{P}_0 \oplus \mathcal{P}_1)\left\{\mathcal{P}_0 \oplus \mathcal{P}_1, \mathcal{P}_0 \oplus \mathcal{P}_1\right\}\subset \sum_{i, j, k \in \mathbb{Z}_2} \left\{\mathcal{P}_i \mathcal{P}_j, \mathcal{P}_k\right\} + \sum_{i, j, k \in \mathbb{Z}_2} \left\{\mathcal{P}_j \mathcal{P}_i, \mathcal{P}_k\right\} \subset \mathcal{P}^2_{\left\{\cdot, \cdot\right\}}.$$
Therefore, in any simple transposed Poisson superalgebra the Lie bracket must be perfect. Now, let us introduce the key notion of a graded transposed quasi-ideal.

\begin{definition}
    A graded transposed quasi-ideal of a transposed Poisson superalgebra $\mathcal{P}=\mathcal{P}_0 \oplus \mathcal{P}_1$ is a $\mathbb{Z}_2$-graded proper subspace $\mathcal{I} = \mathcal{I}_0 \oplus \mathcal{I}_1$ of $\mathcal{P}$ such that
    $\left\{\mathcal{P}, \mathcal{I}\right\}\subset \mathcal{I}$ and $\left\{\mathcal{P}\mathcal{I}, \mathcal{P}\right\}\subset \mathcal{I}$.
\end{definition}

The following results are the equivalents of Lemma \ref{quasi} and Lemma \ref{ideal} for superalgebras. The proof is a repetition of the proof of the corresponding lemmas, but considering a graded ideal and using the equations in Proposition \ref{propeqs}. The same applies for Theorem \ref{main2}.

\begin{lemma}\label{quasisuper}
    A simple transposed Poisson superalgebra contains no graded transposed quasi-ideals.
\end{lemma}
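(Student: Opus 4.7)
The plan is to mimic the proof of Lemma \ref{quasi} while tracking the $\mathbb{Z}_2$-grading carefully and using the superidentity (\ref{propeq1}) in place of its ungraded counterpart. Starting from a graded transposed quasi-ideal $\mathcal{I} = \mathcal{I}_0 \oplus \mathcal{I}_1$, I would consider the maximal $\mathbb{Z}_2$-graded subspace $\mathcal{I}' = \mathcal{I}'_0 \oplus \mathcal{I}'_1$ of $\mathcal{P}$ with the property $\left\{\mathcal{P}, \mathcal{I}'\right\} \subset \mathcal{I}$. Such a maximal graded subspace exists because the defining condition is preserved by sums of graded subspaces. The goal then is to verify that $\mathcal{I}'$ is itself a proper graded ideal of $\mathcal{P}$, which will contradict simplicity.

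First I would observe that since $\mathcal{P}$ is simple, the associated Lie superalgebra is perfect (as noted before the lemma statement), so $\mathcal{I}' \neq \mathcal{P}$, otherwise $\mathcal{I} \supset \{\mathcal{P},\mathcal{P}\} = \mathcal{P}$ would contradict that $\mathcal{I}$ is proper. Next, using that $\mathcal{I}$ is a graded transposed quasi-ideal, one has $\left\{\mathcal{P}, \mathcal{P}\mathcal{I}\right\} \subset \mathcal{I}$, so by maximality of $\mathcal{I}'$ the inclusion $\mathcal{P}\mathcal{I} \subset \mathcal{I}'$ follows (the subspace $\mathcal{I}' + \mathcal{P}\mathcal{I}$ is still graded and still satisfies the defining condition).

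The key step is to show $\mathcal{I}' \mathcal{P}^2_{\{\cdot,\cdot\}} \subset \mathcal{I}'$. For homogeneous $x \in \mathcal{I}'$ and $y, z \in \mathcal{P}$, applying the superidentity (\ref{propeq1}) gives, up to sign,
\begin{equation*}
(-1)^{xz} x \left\{y, z\right\} = -(-1)^{yx} y\left\{z, x\right\} - (-1)^{zy} z\left\{x, y\right\}.
\end{equation*}
Each term on the right lies in $\mathcal{P}\left\{\mathcal{P}, \mathcal{I}'\right\} \subset \mathcal{P}\mathcal{I} \subset \mathcal{I}'$, so $\mathcal{I}'\left\{y, z\right\} \subset \mathcal{I}'$. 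Since the Lie bracket is perfect, $\mathcal{I}' \mathcal{P} = \mathcal{I}' \mathcal{P}^2_{\{\cdot,\cdot\}} \subset \mathcal{I}'$. Combined with $\left\{\mathcal{P}, \mathcal{I}'\right\} \subset \mathcal{I} \subset \mathcal{I}'$ (valid after enlarging $\mathcal{I}'$ to include $\mathcal{I}$ if necessary, which keeps it graded and preserves maximality), this makes $\mathcal{I}'$ a proper graded ideal, contradicting simplicity.

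The main obstacle is bookkeeping: one needs to ensure that all auxiliary subspaces (the maximal $\mathcal{I}'$, $\mathcal{I}' + \mathcal{P}\mathcal{I}$, $\mathcal{I}' + \mathcal{I}$) remain $\mathbb{Z}_2$-graded, and that the parity factors $(-1)^{xz}, (-1)^{yx}, (-1)^{zy}$ in (\ref{propeq1}) are harmless for the inclusion argument (which they are, since we work with homogeneous elements and only need set-theoretic containment of graded subspaces, not exact identities). Beyond that, the argument is a faithful transcription of the ungraded proof.
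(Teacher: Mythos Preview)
Your proposal is correct and follows exactly the approach the paper indicates: the paper itself says the proof ``is a repetition of the proof of the corresponding lemmas, but considering a graded ideal and using the equations in Proposition~\ref{propeqs},'' which is precisely what you do. The only minor redundancy is your hedge about ``enlarging $\mathcal{I}'$ to include $\mathcal{I}$ if necessary''—this is automatic, since $\{\mathcal{P},\mathcal{I}\}\subset\mathcal{I}$ is part of the definition of a graded transposed quasi-ideal, so $\mathcal{I}$ already satisfies the defining condition of $\mathcal{I}'$ and hence $\mathcal{I}\subset\mathcal{I}'$ by maximality.
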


\begin{lemma}\label{idealsuper}
    Let $(\mathcal{P}, \circ, \left\{\cdot,\cdot\right\})$ be a transposed Poisson superalgebra and suppose that the associated Lie superalgebra is perfect, i.e., $\mathcal{P}^2_{\left\{\cdot, \cdot\right\}} =\mathcal{P}$. Then any graded ideal in the Lie superalgebra $(\mathcal{P}, \left\{\cdot,\cdot\right\})$ is a graded transposed quasi-ideal.
\end{lemma}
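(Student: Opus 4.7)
The plan is to imitate the proof of Lemma \ref{ideal} verbatim, using the $\mathbb{Z}_2$-graded identity (\ref{propeq3}) from Proposition \ref{propeqs} as the super-replacement for the identity $\left\{hx, \left\{y, z\right\}\right\} = -\left\{hy, \left\{z, x\right\}\right\} - \left\{hz, \left\{x, y\right\}\right\}$ that was used in characteristic zero. Let $\mathcal{I} = \mathcal{I}_0 \oplus \mathcal{I}_1$ be a graded ideal of the Lie superalgebra $(\mathcal{P}, \{\cdot, \cdot\})$. The first condition in the definition of a graded transposed quasi-ideal, namely $\{\mathcal{P}, \mathcal{I}\} \subset \mathcal{I}$, is immediate. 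Everything reduces to showing the second condition, $\{\mathcal{P}\mathcal{I}, \mathcal{P}\} \subset \mathcal{I}$.

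To that end, I would take homogeneous elements $h \in \mathcal{P}$, $x \in \mathcal{I}$, and use the hypothesis $\mathcal{P} = \{\mathcal{P}, \mathcal{P}\}$ to rewrite an arbitrary generator of $\{\mathcal{P}\mathcal{I}, \mathcal{P}\}$ as $\{hx, \{y,z\}\}$ for homogeneous $y,z \in \mathcal{P}$. Applying (\ref{propeq3}) with the variables $x,y,z$ (and the same $h$), one obtains
\begin{equation*}
(-1)^{xz}\{hx, \{y,z\}\} = -(-1)^{yx}\{hy, \{z,x\}\} - (-1)^{zy}\{hz, \{x,y\}\}.
\end{equation*}
Both terms on the right are of the form $\{h'\cdot , \{\cdot, \mathcal{I}\}\}$ with $h' \in \mathcal{P}$ homogeneous; since $\{\mathcal{P}, \mathcal{I}\} \subset \mathcal{I}$ by the ideal hypothesis, each inner bracket sits in $\mathcal{I}$, and then the outer bracket again lies in $\{\mathcal{P}, \mathcal{I}\} \subset \mathcal{I}$. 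Hence
\begin{equation*}
\{\mathcal{P}\mathcal{I}, \mathcal{P}\} \;=\; \{\mathcal{P}\mathcal{I}, \{\mathcal{P}, \mathcal{P}\}\} \;\subset\; \{\mathcal{P}\mathcal{P}, \{\mathcal{P}, \mathcal{I}\}\} + \{\mathcal{P}\mathcal{P}, \{\mathcal{I}, \mathcal{P}\}\} \;\subset\; \mathcal{I},
\end{equation*}
which is precisely the super-analogue of the displayed inclusion in the proof of Lemma \ref{ideal}. Graded homogeneity of the inclusion is automatic since all operations respect the $\mathbb{Z}_2$-grading and $\mathcal{I}$ is graded.

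I do not expect a genuine obstacle here; the only thing to be careful about is that, unlike the ungraded case where one can freely rewrite the identity as a straight expansion, here one must work with homogeneous elements and carry the parity signs from (\ref{propeq3}) through the computation. These signs are, however, irrelevant for the subspace inclusion being asserted. The argument therefore really is a routine ``graded repetition'' of the Lemma \ref{ideal} proof, which is exactly what the paragraph immediately preceding the statement promises.
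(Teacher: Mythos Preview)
Your proposal is correct and follows exactly the approach the paper intends: the paper explicitly says the proof of Lemma~\ref{idealsuper} is a repetition of that of Lemma~\ref{ideal} using the graded identities of Proposition~\ref{propeqs}, and you carry this out precisely with identity~(\ref{propeq3}). The only quibble is your phrase ``used in characteristic zero'' for Lemma~\ref{ideal}---that lemma holds over any field of characteristic $\neq 2$, not just characteristic zero---but this does not affect the argument.
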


Using the previous lemmas, the equivalent of Theorem \ref{main1} for superalgebras is obtained. 

\begin{theorem}\label{main2}
    Any simple transposed Poisson superalgebra has simple (or abelian) super-Lie bracket. 
\end{theorem}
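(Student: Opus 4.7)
The plan is to mirror the argument from Theorem \ref{main1} in the graded setting, leveraging the two preceding lemmas exactly as was done there. Let $\mathcal{P} = \mathcal{P}_0 \oplus \mathcal{P}_1$ be a simple transposed Poisson superalgebra. If the super-Lie bracket $\left\{\cdot,\cdot\right\}$ is abelian there is nothing to prove, so from now on I may assume it is non-abelian and aim to show it is then simple.

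First I would verify that the bracket is perfect. If $\mathcal{P}^2_{\left\{\cdot,\cdot\right\}} \neq \mathcal{P}$, then the computation carried out just before the definition of a graded transposed quasi-ideal shows that $\mathcal{P}^2_{\left\{\cdot,\cdot\right\}}$ is a graded ideal of $\mathcal{P}$. Since the bracket is assumed non-abelian this ideal is also non-zero, hence it is a proper non-zero graded ideal, contradicting the simplicity of $\mathcal{P}$. So $\mathcal{P}^2_{\left\{\cdot,\cdot\right\}} = \mathcal{P}$.

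Now suppose for contradiction that $(\mathcal{P}, \left\{\cdot,\cdot\right\})$ is not simple as a Lie superalgebra. Then it possesses a non-trivial proper graded ideal $\mathcal{I}$. By Lemma \ref{idealsuper}, applicable because the bracket has just been shown to be perfect, $\mathcal{I}$ is a graded transposed quasi-ideal of $\mathcal{P}$. But Lemma \ref{quasisuper} says that a simple transposed Poisson superalgebra has no graded transposed quasi-ideals, contradiction. Therefore $(\mathcal{P}, \left\{\cdot,\cdot\right\})$ must be simple.

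The argument is essentially a transcription of the proof of Theorem \ref{main1}; all the genuine technical content has been pushed into Lemmas \ref{quasisuper} and \ref{idealsuper}, whose proofs the author declares to be direct adaptations of their non-super counterparts via the graded identities in Proposition \ref{propeqs}. Consequently I do not expect a substantive obstacle in this step itself: the only bookkeeping worth double-checking is that the sign factors appearing in (\ref{propeq1}) and (\ref{propeq3}), which are the identities actually invoked inside the two lemmas, do not affect the inclusion-of-subspaces conclusions one needs — and indeed they do not, because each relevant sum lands in the same $\mathbb{Z}_2$-graded subspace irrespective of the scalar coefficients.
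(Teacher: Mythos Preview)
Your argument is correct and matches the paper's approach exactly: the paper explicitly states that the proof of Theorem~\ref{main2} is a repetition of that of Theorem~\ref{main1}, using Lemmas~\ref{quasisuper} and~\ref{idealsuper} in place of their non-super analogues, and your write-up does precisely this. Your added remark about the sign factors in (\ref{propeq1}) and (\ref{propeq3}) not disturbing the subspace inclusions is a welcome sanity check that the paper leaves implicit.
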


By a similar argument as in Theorem \ref{clasi1}, but using the fact that simple finite-dimensional Lie superalgebras have no non-trivial transposed Poisson superalgebra structure \cite[Corollary 12]{FKL}, we can conclude the classification theorem.

\begin{theorem}\label{clasi2}
    Suppose that $\mathbb{F}$ is algebraically closed and $\textrm{char}(\mathbb{F}) = 0$, then any simple finite-dimensional transposed Poisson superalgebra is trivial.
\end{theorem}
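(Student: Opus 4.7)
The plan is to imitate the proof of Theorem \ref{clasi1} in the super setting, which is now possible thanks to Theorem \ref{main2} and the superalgebra analogue of Filippov's result on $\frac{1}{2}$-derivations.

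Step one is to invoke Theorem \ref{main2} on a simple finite-dimensional transposed Poisson superalgebra $\mathcal{P}$: the associated super-Lie bracket is either simple or abelian. The abelian case is immediate -- if the super-Lie bracket vanishes identically, then one of the two defining multiplications is already zero and the transposed Poisson superalgebra is trivial in the sense of the statement.

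Step two handles the remaining case of a simple super-Lie bracket. The key input is \cite[Corollary 12]{FKL}, which asserts that simple finite-dimensional Lie superalgebras over an algebraically closed field of characteristic zero admit no non-trivial transposed Poisson superalgebra structure (equivalently, all their $\frac{1}{2}$-superderivations are scalar multiples of the identity, which, together with supercommutativity and the fact that simple Lie superalgebras have dimension at least two, forces $\circ$ to vanish). Applied to the super-Lie bracket of $\mathcal{P}$, this immediately yields that the associative supercommutative multiplication $\circ$ is zero and hence that $\mathcal{P}$ is trivial.

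I do not anticipate any substantial obstacle: both ingredients (Theorem \ref{main2} and the super-Filippov classification) are already in place, and the argument is a direct translation of the ungraded proof. The only point requiring a minor check is that ``trivial'' (meaning one of the two multiplications vanishes) is interpreted consistently across the abelian and simple sub-cases, but this does not involve any new computation.
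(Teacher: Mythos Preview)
Your proposal is correct and follows essentially the same argument as the paper: invoke Theorem \ref{main2} to reduce to a simple (or abelian) super-Lie bracket, then apply \cite[Corollary 12]{FKL} to conclude that the associative supercommutative product must vanish. The only difference is that you spell out the abelian case explicitly, whereas the paper handles it by the standing convention (from Section~\ref{sec2}) that the Lie bracket is assumed non-abelian.
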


\section{The Kantor double of a transposed Poisson algebra and superalgebra}

On this section, let $\mathbb{F}$ be an arbitrary field of characteristic different from two. Let us recall the Kantor double construction on the most general context. Let $(\mathcal{P} = \mathcal{P}_0\oplus \mathcal{P}_1, \circ, \left\{\cdot,\cdot\right\})$ be a superalgebra equipped with two multiplications: the first one is associative and supercommutative and the second one is super skew-symmetric. This class of algebras are called dot-bracket superalgebras \cite{km95}. We will assume that they are not necessarily unital, as in the rest of the paper. Denote by $\mathcal{P}^s$ a duplication of the space $\mathcal{P}$ with opposite parity of the elements, so if $x\in\mathcal{P}_i$, its corresponding copy $x^s\in\mathcal{P}^s$ has parity $i+1 \in \mathbb{Z}_2$.  Then let $\mathfrak{J}(\mathcal{P})$ be the vector space $\mathcal{P}\oplus \mathcal{P}^s$ endowed with the multiplication $*$ given for homogeneous elements $x, y \in \mathcal{P}_0\cup \mathcal{P}_1$ by
\begin{equation*}
    x*y = x\circ y, \quad x^s * y = (-1)^x x* y^s =  (x\circ y)^s, \quad x^s * y^s = (-1)^x \left\{x, y\right\}.
\end{equation*}
By this construction, the algebra $\mathfrak{J}(\mathcal{P})$ is a superalgebra with even space $\mathfrak{J}(\mathcal{P})_0 = \mathcal{P}_0 \oplus \mathcal{P}_1^s$ and odd space $\mathfrak{J}(\mathcal{P})_1 = \mathcal{P}_1\oplus \mathcal{P}_0^s$. 
Also, $\mathfrak{J}(\mathcal{P}) = \mathfrak{J}(\mathcal{P})_0 \oplus \mathfrak{J}(\mathcal{P})_1$ has ``double" the dimension of $\mathcal{P}$, hence its name: the Kantor double of $\mathcal{P}$.  

\medskip

We will prove that the Kantor double of a transposed Poisson (super) algebra is a Jordan superalgebra. We may refer to superalgebras simply by algebra if the context is clear. Let $(\mathcal{P}=\mathcal{P}_0 \oplus \mathcal{P}_1, \circ, \left\{\cdot, \cdot\right\})$ be a transposed Poisson superalgebra and denote by $P_a, Q_a: \mathcal{P} \rightarrow \mathcal{P}$ the linear operators corresponding to the left multiplications given by $P_a(x) = ax$ and $Q_a(x) = \left\{a,x\right\}$, for an homogeneous element $a\in \mathcal{P}_0 \cup \mathcal{P}_1$ . Then, we have the following useful relations between these linear operators.

\begin{proposition}
    \label{rels}
Given $\mathcal{P}$ a transposed Poisson superalgebra and $x,y,z\in \mathcal{P}_0 \cup \mathcal{P}_1$, we have the relations:
\begin{enumerate}
    \item The associativity of $\circ$ is equivalent to $[P_x, P_y] = 0$, since $\circ$ is also supercommutative.

    \item The Jacobi superidentity (\ref{sjacob}) is equivalent to  $[Q_x, Q_y] = Q_{\left\{x, y\right\}}$, due to the super skew-symmetry.

    \item The Leibniz rule (\ref{supertpaid}) can be written as 
    $[P_x, Q_y] = \frac{1}{2}(Q_{xy} - (-1)^{xy}Q_{y}P_{x})$ or 
    $Q_{xy} + (-1)^{xy} Q_{y}P_{x} = 2 P_{x}Q_{y}$.    
    
    \item Moreover, a consequence of this identity is  
    $$(-1)^{xz}Q_{\left\{x, y\right\}z}  + (-1)^{yx}Q_{\left\{y, z\right\}x} + (-1)^{zy}Q_{\left\{z, x\right\}y} =0. $$
\end{enumerate}
Note that the bracket of linear operators is graded, that is, for $A, B \in End(\mathcal{P})$, we have $[A, B] := AB - (-1)^{AB} B A$.
\end{proposition}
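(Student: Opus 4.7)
The plan is to recognize that items (1)--(3) are direct reformulations in operator form of the defining identities of a transposed Poisson superalgebra (associativity of $\circ$ together with supercommutativity, the Jacobi superidentity \eqref{sjacob}, and the Leibniz superidentity \eqref{supertpaid}), while (4) is a consequence obtained by combining (3) with the already-established identity \eqref{propeq1}.

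For (1), I would expand
\[
[P_x, P_y](z) = x(yz) - (-1)^{xy}y(xz),
\]
and use associativity together with supercommutativity to rewrite $x(yz) = (xy)z = (-1)^{xy}(yx)z = (-1)^{xy}y(xz)$, showing the bracket vanishes. The converse direction unwinds this chain using supercommutativity to recover $(xy)z = x(yz)$. For (2), I would rewrite the Jacobi superidentity \eqref{sjacob} in the equivalent graded-derivation form $\{x,\{y,z\}\} - (-1)^{xy}\{y,\{x,z\}\} = \{\{x,y\},z\}$, which is exactly $[Q_x, Q_y](z) = Q_{\{x,y\}}(z)$. For (3), the Leibniz superidentity $2x\{y,z\} = \{xy,z\} + (-1)^{xy}\{y, xz\}$ translates verbatim to $2 P_x Q_y = Q_{xy} + (-1)^{xy} Q_y P_x$, and the equivalent form $[P_x, Q_y] = \tfrac{1}{2}(Q_{xy} - (-1)^{xy}Q_y P_x)$ follows by unfolding the definition of the graded bracket and rearranging.

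For (4), the plan is to apply the claimed sum of operators to an arbitrary $w$ and reduce to \eqref{propeq1}. Concretely, $(-1)^{xz}Q_{\{x,y\}z}(w) = (-1)^{xz}\{\{x,y\}z, w\}$; using super skew-symmetry $\{\{x,y\}z, w\} = -(-1)^{(x+y+z)w}\{w, \{x,y\}z\}$ and then supercommutativity $\{x,y\}z = (-1)^{(x+y)z}z\{x,y\}$, one rewrites each cyclic term. After collecting parities and cancelling $2xz$, $2xy$, $2yz$ in the exponents, the sum becomes
\[
-(-1)^{(x+y+z)w}\bigl[(-1)^{yz}\{w, z\{x,y\}\} + (-1)^{zx}\{w, x\{y,z\}\} + (-1)^{xy}\{w, y\{z,x\}\}\bigr].
\]
The bracketed expression is exactly $\{w, \cdot\}$ applied to the left-hand side of \eqref{propeq1} (noting $(-1)^{xz} = (-1)^{zx}$, $(-1)^{yx} = (-1)^{xy}$, $(-1)^{zy} = (-1)^{yz}$), so linearity of $\{w,\cdot\}$ and \eqref{propeq1} force the whole sum to vanish.

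The main obstacle I expect is the sign bookkeeping in part (4), where three layers of signs combine---the prescribed coefficient, the super skew-symmetry sign from moving $w$ to the right slot, and the supercommutativity sign from pulling $z$ (respectively $x$, $y$) past $\{x,y\}$---and must collapse exactly to the coefficients of \eqref{propeq1}. This is delicate but routine; the rest of the proof is immediate once the reduction is set up.
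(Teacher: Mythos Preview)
Your proof is correct and, for items (1)--(3), essentially identical to the paper's: both simply unfold the definitions of $P_x$, $Q_x$ and match them against associativity/supercommutativity, the Jacobi superidentity, and the Leibniz superidentity, respectively.

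For item (4) there is a small difference worth noting. You apply the operator sum to an element $w$, use super skew-symmetry to move $w$ to the left slot, and then factor $\{w,\cdot\}$ outside so that the argument becomes the left-hand side of \eqref{propeq1}. This is valid and your sign bookkeeping checks out. The paper, however, takes a shorter route: it simply invokes the linearity of the assignment $a \mapsto Q_a$, so that it suffices to verify that the \emph{element}
\[
(-1)^{xz}\{x,y\}z + (-1)^{yx}\{y,z\}x + (-1)^{zy}\{z,x\}y
\]
is zero; after one application of supercommutativity to each summand (e.g.\ $\{x,y\}z = (-1)^{(x+y)z}z\{x,y\}$), this becomes exactly \eqref{propeq1}. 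Your detour through an auxiliary $w$ and skew-symmetry ultimately performs the same reduction, just wrapped inside $\{w,\cdot\}$. Also, your opening remark that (4) is obtained ``by combining (3) with \eqref{propeq1}'' is slightly off: (3) plays no role in either argument, only the linearity of $Q$ and \eqref{propeq1} are used.
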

\begin{proof}
    The relations (1) and (2) are well-known for associative commutative and Lie superalgebras. The first relation in (3) applied to an element $z \in \mathcal{P}$ gives us
    $ x\left\{y, z\right\} - (-1)^{xy}\left\{y, x z\right\} = \frac{1}{2}\left( \left\{xy, z\right\} - (-1)^{xy}\left\{y, x z\right\} \right),$ which is equivalent to the Leibniz rule. For the second relation in (3), when applied to $z \in \mathcal{P}$, we obtain 
    $\left\{xy, z\right\} + (-1)^{xy}\left\{y, xz\right\} = 2 x\left\{y, z\right\}$.    
    The relation (4) is a consequence of the linearity of $Q$ and the identity (\ref{propeq1}).
\end{proof}

Additionally, we have the next relations involving the left multiplication operators of a transposed Poisson superalgebra.

\begin{proposition}\label{rels2}
Given $\mathcal{P}$ a transposed Poisson superalgebra and $x,y,z\in \mathcal{P}$, we have the following relations:
    \begin{enumerate}
        \item $P_{x}Q_{y} - (-1)^{xy}P_{y}Q_{x}  = - P_{\left\{x,y\right\}} $.

        \item  $(-1)^{zy}Q_{z}P_{\left\{x, y\right\}} + (-1)^{xz}Q_{x}P_{\left\{y, z\right\}} + (-1)^{yx}Q_{y}P_{\left\{z, x\right\}} = 0$.

        \item { $(-1)^{xz}Q_{\left\{x, y\right\}}P_{z} + (-1)^{yx}Q_{\left\{y, z\right\}}P_{x} + (-1)^{zy}Q_{\left\{z, x\right\}}P_{y} = 0$.}

        \item {$(-1)^{xz}P_{\left\{x, y\right\}}Q_{z} + (-1)^{yx}P_{\left\{y, z\right\}}Q_{x} + (-1)^{zy}P_{\left\{z, x\right\}}Q_{y} = 0$.}

        \item $ Q_{xy}P_{z} - (-1)^{zx + zy}Q_{zx}P_{y} = 2 P_{x\left\{y, z\right\}} $ and $(-1)^{yx}Q_{yz}P_{x} + (-1)^{zy}Q_{zx}P_{y} 
        = 2 (-1)^{xz}P_{xy}Q_{z}$.

        \item $ (-1)^{zy+zx}P_{z}Q_{xy} -  (-1)^{yx}P_{y}Q_{xz} = P_{x\left\{y, z\right\}} $.
    \end{enumerate}
\end{proposition}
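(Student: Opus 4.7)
The plan is to evaluate each operator identity in items (1)--(6) on an arbitrary homogeneous element $h \in \mathcal{P}_0 \cup \mathcal{P}_1$, thereby reducing each claim to a scalar superidentity that either appears directly in Proposition \ref{propeqs} or follows from it together with the transposed Leibniz rule (\ref{supertpaid}). The remaining work is to rearrange each resulting expression by supercommutativity of $\circ$ and super skew-symmetry of $\{\cdot,\cdot\}$ to bring it into the exact form stated.

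Concretely, for item (1), applying both sides to $h$ yields $x\{y,h\}-(-1)^{xy}y\{x,h\} = -\{x,y\}h$, which is a direct rewriting of (\ref{propeq1}) after using super skew-symmetry on $\{h,x\}$ and supercommutativity to move $h$ past $\{x,y\}$. The cyclic identities (2), (3) and (4), upon evaluation at $h$, become cyclic sums in $x,y,z$ that correspond termwise to (\ref{propeq2}), (\ref{propeq3}), and (\ref{propeq4}) respectively, once $h$ is pushed into its canonical position and bracket arguments are swapped as needed. The two identities in item (5) are the operator incarnations of (\ref{propeq5}); separating the operator factors in the second one additionally invokes Proposition \ref{rels}(3) to rewrite $Q_{xy}$ as $2P_x Q_y - (-1)^{xy}Q_y P_x$. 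Finally, item (6) reduces on evaluation at $h$ to (\ref{propeq6}) after an analogous reordering.

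The main obstacle is not conceptual but purely combinatorial: the $\mathbb{Z}_2$-grading introduces a sign every time one transposes homogeneous factors or swaps the arguments of a bracket, and one must verify that all these signs combine to match the coefficients stated in (1)--(6). No individual step is hard, but the bookkeeping is tedious; the cleanest strategy is to reduce each expression to a canonical ordering of $x,y,z,h$ before comparing, so that the cyclic symmetry of the identities in Proposition \ref{propeqs} can be invoked term by term.
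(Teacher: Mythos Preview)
Your approach is exactly the paper's: the proof there consists of the single sentence ``These relations are obtained, respectively, by rewriting the identities of Proposition~\ref{propeqs} using the maps $P$ and $Q$,'' and your plan to apply each operator identity to a homogeneous $h$ and match it against (\ref{propeq1})--(\ref{propeq6}) is precisely that rewriting spelled out. One small remark: the detour through Proposition~\ref{rels}(3) for the second identity in item~(5) is unnecessary, since that identity also falls out directly from (\ref{propeq5}) with a suitable relabelling of variables (take $x\to z$, $y\to h$, $u\to x$, $v\to y$), just as the first one does.
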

\begin{proof}
    These relations are obtained, respectively, by rewriting the identities 
of the Proposition \ref{propeqs} using the maps $P$ and $Q$.
\end{proof}

 The following Theorem establishes a connection between the class of transposed Poisson superalgebras and the class of Jordan superalgebras through the Kantor double construction. 
 
\begin{theorem}\label{jordanbracket}
If $\mathcal{P}$ is a transposed Poisson algebra, then the algebra $\mathfrak{J}(\mathcal{P})$ is a Jordan superalgebra.
\end{theorem}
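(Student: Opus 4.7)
The plan is to verify directly the two defining properties of a Jordan superalgebra on $\mathfrak{J}(\mathcal{P})$. Supercommutativity is immediate from the construction: on $\mathcal{P}$ the product is $\circ$, which is commutative; for the mixed part $a * b^s = (a\circ b)^s = (b\circ a)^s = b^s * a$ with no sign needed since every element of $\mathcal{P}$ is even in $\mathfrak{J}(\mathcal{P})$; and on $\mathcal{P}^s$ we have $a^s * b^s = \left\{a, b\right\} = -\left\{b, a\right\} = -\,b^s * a^s$, which matches $(-1)^{|a^s||b^s|} = -1$.

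For the Jordan superidentity (\ref{sjord}), my strategy is to apply both sides to a fourth element $w \in \mathcal{P} \cup \mathcal{P}^s$, expand the commutators using the definition of $*$, and express everything in terms of the operators $P_a$ and $Q_a$ on $\mathcal{P}$. This produces a case analysis on the parities of $x, y, z, w$ in $\mathfrak{J}(\mathcal{P})$, i.e., on which of them lie in $\mathcal{P}$ versus $\mathcal{P}^s$. Cyclic symmetry in $x, y, z$ together with supercommutativity collapses the $2^4 = 16$ possible patterns into only a handful of distinct operator identities on $\mathcal{P}$ to check.

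Concretely, the pattern with $x, y, z \in \mathcal{P}$ reduces to the Jordan identity for the associative commutative algebra $(\mathcal{P}, \circ)$, which is trivial. The patterns with exactly one or exactly two of $x, y, z$ in $\mathcal{P}^s$ reduce, after expansion, to the operator form of the transposed Leibniz rule (Proposition \ref{rels}(3)) and to the mixed $PQ$-relations of Proposition \ref{rels2}, respectively. The remaining pattern has $x = a^s$, $y = b^s$, $z = c^s$ all odd: if $w \in \mathcal{P}$ the cyclic commutator sum vanishes trivially by commutativity of $\circ$; if $w = d^s \in \mathcal{P}^s$ the sum becomes $\sum_{\mathrm{cyc}}(\left\{a,b\right\}\left\{c,d\right\} - \left\{c, \left\{a,b\right\}d\right\})$, which vanishes upon invoking (\ref{propeq4}) for the first group of terms and (\ref{propeq2}) for the second.

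The main obstacle I anticipate is organisational rather than conceptual: each parity pattern must be expanded with care, the appropriate identity from Propositions \ref{rels} and \ref{rels2} selected, and the resulting equalities carefully compared. Signs are trivial here because $\mathcal{P}$ is ungraded, but the bookkeeping is tight and it is easy to misidentify which cyclic permutation of which identity is needed. Once the dictionary between each parity pattern and its governing identity is in place, every case collapses to something already recorded in Proposition \ref{propeqs} or in Propositions \ref{rels}--\ref{rels2}, and no new identity on $\mathcal{P}$ has to be established to close the argument.
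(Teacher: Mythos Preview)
Your approach is essentially the same as the paper's: verify supercommutativity directly, then check the Jordan superidentity by a case split on how many of $x,y,z$ lie in $\mathcal{P}^s$, reducing each case to the operator identities collected in Propositions~\ref{propeqs}, \ref{rels}, \ref{rels2}. The only cosmetic difference is that the paper packages the left multiplications $L_a,L_{a^s}$ as $2\times2$ operator matrices on $\mathcal{P}\oplus\mathcal{P}^s$ (so the two matrix entries in each case are exactly your subcases $w\in\mathcal{P}$ versus $w\in\mathcal{P}^s$), whereas you apply to a fourth element directly.

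One small correction: the case with exactly one of $x,y,z$ in $\mathcal{P}^s$ does not close using Proposition~\ref{rels}(3) alone; after expanding you will also need identity~(\ref{propeq5}) (equivalently Proposition~\ref{rels2}(5)) to kill the residual terms, just as the paper does. Since this identity is already in your declared toolkit, the plan still goes through.
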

\begin{proof}
We verify the supercommutativity first. Given $x, y \in \mathcal{P}_0 \cup \mathcal{P}_1$ and corresponding $x^s, y^s \in \mathcal{P}_0^s \cup \mathcal{P}_1^s$, by the definition of the multiplication we have that $x*y = xy = (-1)^{xy} yx = (-1)^{xy} y*x$, 
$$x^s * y = (xy)^s = (-1)^{xy}(yx)^s = (-1)^{(x+1)y} y * x^s , $$
$$x^s * y^s = (-1)^x\left\{x, y\right\} = - (-1)^{xy+x}\left\{y, x\right\} = (-1)^{xy+y+x+1} y^s * x^s = (-1)^{(x+1)(y+1)} y^s * x^s.$$
Since the parity of $x^s$ is $|x|+1$, we conclude that $\mathfrak{J}(\mathcal{P})$ is supercommutative.

\medskip

Now, we check the Jordan superidentity (\ref{sjord}). 
Observe that the left multiplication on the superalgebra $\mathfrak{J}(\mathcal{P})$ are the linear operators $L_a, L_{a^s}: \mathfrak{J}(\mathcal{P})\rightarrow\mathfrak{J}(\mathcal{P}) $  corresponding to the matrices
$$L_a \equiv \begin{pmatrix}
P_a & 0 \\
0 & (-1)^aP_a 
\end{pmatrix}, \quad L_{a^s} \equiv \begin{pmatrix}
0 & (-1)^aQ_a \\
P_a & 0 
\end{pmatrix}.$$
where $a\in \mathcal{P}_0\cup \mathcal{P}_1$. A straightforward verification shows that $|L_a|=|a|$ and $|L_{a^s}| = |a|+1$. Thus, by the relations (1-3) in Proposition \ref{rels}, we have the following relations between the multiplication operators in $\mathfrak{J}(\mathcal{P})$.

    \begin{equation}\label{com1}
    \begin{split}
            [L_x, L_y] & = \begin{pmatrix}
            P_x & 0 \\
            0 & (-1)^xP_x 
            \end{pmatrix} \begin{pmatrix}
            P_y & 0 \\
            0 & (-1)^yP_y 
            \end{pmatrix} - (-1)^{xy}\begin{pmatrix}
            P_y & 0 \\
            0 & (-1)^yP_y 
            \end{pmatrix} \begin{pmatrix}
            P_x & 0 \\
            0 & (-1)^xP_x 
            \end{pmatrix} \\
             & = \begin{pmatrix}
            [P_x, P_y] & 0 \\
            0 & (-1)^{x+y}[P_x, P_y] 
            \end{pmatrix} = 0.
    \end{split}
    \end{equation}

    \begin{equation}\label{com2}
    \begin{split}
            [L_x, L_{y^s}] &= \begin{pmatrix}
            P_x & 0 \\
            0 & (-1)^xP_x 
            \end{pmatrix} \begin{pmatrix}
            0 & (-1)^yQ_y \\
            P_y & 0 
            \end{pmatrix} - (-1)^{x(y+1)}\begin{pmatrix}
            0 & (-1)^yQ_y \\
            P_y & 0 
            \end{pmatrix} \begin{pmatrix}
            P_x & 0 \\
            0 &  (-1)^xP_x 
            \end{pmatrix}  \\
            &=\begin{pmatrix}
            0 & (-1)^{y}[P_x, Q_y] \\
            (-1)^x[P_x, P_y]  & 0 
            \end{pmatrix} = \begin{pmatrix}
            0 & \frac{1}{2}(-1)^y(Q_{xy} - (-1)^{xy}Q_{y}P_{x}) \\
            0 & 0 
            \end{pmatrix}.
    \end{split}
    \end{equation}

    \begin{equation}\label{com3}
    \begin{split}
            [L_{x^s}, L_{y^s}] &= \begin{pmatrix}
            0 & (-1)^xQ_x \\
            P_x & 0 
            \end{pmatrix} \begin{pmatrix}
            0 & (-1)^yQ_y \\
            P_y & 0 
            \end{pmatrix} - (-1)^{(x+1)(y+1)} \begin{pmatrix}
            0 & (-1)^yQ_y \\
            P_y & 0 
            \end{pmatrix} \begin{pmatrix}
            0 & (-1)^xQ_x \\
            P_x & 0 
            \end{pmatrix} \\
            &= \begin{pmatrix}
            (-1)^x(Q_{x}P_{y}+ (-1)^{xy} Q_{y}P_{x}) & 0 \\
            0 & (-1)^y(P_{x}Q_{y} + (-1)^{xy} P_{y}Q_{x}) 
            \end{pmatrix}.
    \end{split}
    \end{equation}

\bigskip

Proceed by considering the various cases that arise depending on the parity. 
\begin{enumerate}
    \item For $x,y,z \in \mathcal{P}_0\cup \mathcal{P}_1$. Then the Jordan superidentity (\ref{sjord}) is verified, as a consequence of (\ref{com1}).

\medskip

    \item For $x,y \in \mathcal{P}$ and $z^s\in \mathcal{P}^s$, we have 
    \begin{equation*} 
    \begin{split}
        &(-1)^{x(z+1)}[L_{x* y}, L_{z^s}] + (-1)^{yx}[L_{y* z^s}, L_x] + (-1)^{(z+1)y}[L_{z^s* x}, L_y] \\
        &= (-1)^{x(z+1)}[L_{xy}, L_{z^s}] + (-1)^{yx}[L_{(-1)^y(y z)^s}, L_x] + (-1)^{(z+1)y}[L_{(zx)^{s}}, L_y].        
    \end{split}
    \end{equation*}

    Applying (\ref{com2}), we can write the expression above as the sum of matrices
    \begin{equation*}
        \begin{split}
    (-1)^{xz+x}\begin{pmatrix}
            0 & \frac{1}{2}(-1)^z(Q_{xyz} - (-1)^{xz+ yz}Q_{z}P_{xy}) \\
            0 & 0 
    \end{pmatrix} 
    &- (-1)^{xz+y+x}\begin{pmatrix}
            0 & \frac{1}{2}(-1)^{y+z}(Q_{xy z} - (-1)^{xy + xz}Q_{y z}P_{x}) \\
            0 & 0 
            \end{pmatrix} \\
    &- (-1)^{yx}\begin{pmatrix}
            0 & \frac{1}{2}(-1)^{x+z}(Q_{yzx} - (-1)^{yz+yx}Q_{zx}P_{y}) \\
            0 & 0 
            \end{pmatrix}.
        \end{split}
    \end{equation*}
    The upper right term, up to $\frac{1}{2}(-1)^{x+z}$, is equal to
    $$(-1)^{xz}(Q_{xyz} - (-1)^{xz+ yz}Q_{z}P_{xy}) - (-1)^{xz}(Q_{xy z} - (-1)^{xy + xz}Q_{y z}P_{x}) - (-1)^{yx}(Q_{yzx} - (-1)^{yz+yx}Q_{zx}P_{y}).$$
    
    After the simplification using the supercommutivity of the transposed Poisson superalgebra, we have
    $$-(-1)^{xz}Q_{xyz} - (-1)^{yz}Q_{z}P_{xy} + (-1)^{xy}Q_{y z}P_{x} +  (-1)^{yz}Q_{zx}P_{y}.$$

    Using the second relation in (3) of Proposition \ref{rels}, we obtain
    $$-2(-1)^{xz}P_{xy}Q_{z} + (-1)^{xy}Q_{y z}P_{x} +  (-1)^{yz}Q_{zx}P_{y}=0.$$
    
    Finally, this is zero by the equation (5) in Proposition \ref{rels2}.

\medskip

    \item For $x \in \mathcal{P}$ and $y^s, z^s\in \mathcal{P}^s$, we have
    \begin{equation*} 
    \begin{split}
        &(-1)^{x(z+1)}[L_{x* y^s}, L_{z^s}] + (-1)^{(y+1)x}[L_{y^s* z^s}, L_x] + (-1)^{(z+1)(y+1)} [L_{z^s* x}, L_{y^s}] \\
        &=  (-1)^{x(z+1)}[L_{(-1)^x(xy)^s}, L_{z^s}] + (-1)^{(y+1)x}[L_{(-1)^y\left\{y, z\right\}}, L_x] + (-1)^{(z+1)(y+1)} [L_{(zx)^s}, L_{y^s}]. 
    \end{split}
    \end{equation*}
    
    By the equations (\ref{com1}) and (\ref{com3}), the previous expression can be written as the following sum of matrices
    \begin{equation*}
        \begin{split}
            (-1)^{xz} \begin{pmatrix}
            (-1)^{x+y}(Q_{xy}P_{z}+ (-1)^{xz+yz} Q_{z}P_{xy}) & 0 \\
            0 & (-1)^z(P_{xy}Q_{z} + (-1)^{xz+yz} P_{z}Q_{xy}) 
    \end{pmatrix} + \\ (-1)^{zy+z+y+1} \begin{pmatrix}
            (-1)^{z+x}(Q_{zx}P_{y}+ (-1)^{zy+xy} Q_{y}P_{zx}) & 0 \\
            0 & (-1)^y(P_{zx}Q_{y} + (-1)^{zy+xy} P_{y}Q_{zx}) 
    \end{pmatrix}.
        \end{split}
    \end{equation*}

    The upper left term is, up to $(-1)^{x+y}$, equal to
    \begin{equation*}
        \begin{split}
           &(-1)^{xz}(Q_{xy}P_{z}+ (-1)^{xz+yz} Q_{z}P_{xy}) - (-1)^{zy} (Q_{zx}P_{y}+ (-1)^{zy+xy} Q_{y}P_{zx})\\
           &= (-1)^{xz}Q_{xy}P_{z}+ (-1)^{yz} Q_{z}P_{xy} - (-1)^{zy} Q_{zx}P_{y}- (-1)^{xy} Q_{y}P_{zx}.
        \end{split}
    \end{equation*}
    By the relation (5) in Proposition~\ref{rels2}, we have the Leibniz rule.
    $$(-1)^{xz}2P_{x\left\{y, z\right\}}+ (-1)^{yz} Q_{z}P_{xy} - (-1)^{xy} Q_{y}P_{zx}=0.$$
    
    The lower right term is, up to $(-1)^{z}$, equal to
    \begin{equation*}
        \begin{split}
          &(-1)^{xz}(P_{xy}Q_{z} + (-1)^{xz+yz} P_{z}Q_{xy}) - (-1)^{zy}(P_{zx}Q_{y} + (-1)^{zy+xy} P_{y}Q_{zx})\\
          &= (-1)^{xz}P_{xy}Q_{z} + (-1)^{yz} P_{z}Q_{xy} - (-1)^{zy}P_{zx}Q_{y} - (-1)^{xy} P_{y}Q_{zx}.
        \end{split}
    \end{equation*}
From here, we can use equation (6) in Proposition \ref{rels2} to obtain an expression which is zero, by (1) in Proposition \ref{rels2}.
    $$(-1)^{xz}P_{xy}Q_{z} - (-1)^{zy}P_{zx}Q_{y} + (-1)^{zx}P_{x\left\{y,z\right\}} = (-1)^{xz} P_{x}(P_{y}Q_{z} - (-1)^{zy}P_{z}Q_{y} + P_{\left\{y,z\right\}}) = 0.$$    





\medskip

    \item Lastly, for $x^s, y^s, z^s\in \mathcal{P}^s$, the Jordan identity is
    \begin{equation*} 
    \begin{split}
        & (-1)^{(x+1)(z+1)}[L_{x^s* y^s}, L_{z^s}]  + (-1)^{(y+1)(x+1)} [L_{y^s* z^s}, L_{x^s}] + (-1)^{(z+1)(y+1)} [L_{z^s* x^s}, L_{y^s}]\\ 
        &= (-1)^{(x+1)(z+1)}[L_{(-1)^x\left\{x, y\right\}}, L_{z^s}]  + (-1)^{(y+1)(x+1)} [L_{(-1)^y\left\{y, z\right\}}, L_{x^s}] + (-1)^{(z+1)(y+1)} [L_{(-1)^z\left\{z, x\right\}}, L_{y^s}]. \end{split}
    \end{equation*}

    Applying the equation (\ref{com2}) and arguing as before, every term in the operator matrix is zero except for 
    $$(-1)^{xz}Q_{\left\{x, y\right\}z} - (-1)^{yz}Q_{z}P_{\left\{x, y\right\}} + (-1)^{yx}Q_{\left\{y, z\right\}x} - (-1)^{zx}Q_{x}P_{\left\{y, z\right\}} + (-1)^{zy}Q_{\left\{z, x\right\}y} - (-1)^{xy}Q_{y}P_{\left\{z, x\right\}}.$$
    
    Using the relation (4) in Proposition \ref{rels}, we obtain the next expression, which is zero, by (2) in Proposition \ref{rels2}.
    $$- (-1)^{yz}Q_{z}P_{\left\{x, y\right\}} - (-1)^{zx}Q_{x}P_{\left\{y, z\right\}} - (-1)^{xy}Q_{y}P_{\left\{z, x\right\}} = 0.$$
\end{enumerate}

Due to the cyclic nature of the Jordan superidentity, the four cases considered here are enough to prove that $\mathfrak{J}(\mathcal{P})$ satisfies the Jordan superidentity (\ref{sjord}), and therefore, it is a Jordan superalgebra.
\end{proof}

Observe that if $\mathcal{P}$ has an ideal $\mathcal{I}$, then its Kantor double has an ideal $\mathcal{I}\oplus \mathcal{I}^s$. Therefore, if $\mathfrak{J}(\mathcal{P})$ is simple, then $\mathcal{P}$ is simple. The converse is more complicate. Kantor proved that the double of a non-trivial Poisson (super) algebra $\mathcal{P}$ is a simple Jordan superalgebra if and only if $\mathcal{P}$ is simple \cite{kantor92}. Later, King and McCrimmon extended this result to unital non-trivial Jordan brackets \cite{km95}. For non-unital Jordan brackets and, in particular, for non-unital transposed Poisson algebras, this result does not hold, as we can see in the next remark.

\begin{remark}
    Consider the family of simple transposed Poisson algebras ${\mathcal{P}}(\alpha,\beta)$ with $\alpha\beta=0$ over a field of characteristic $3$ of Remark \ref{example}.   The Kantor double is the Jordan superalgebra defined on ${\mathcal{P}}\oplus{\mathcal{P}}^{s}$, with multiplication given by
    \begin{equation*}
        \begin{split}
            &e_1*e_1 = \alpha e_2, \quad e_1 * e_1^s = \alpha e_2^s, \quad \quad  e_1^s*e_1 = \alpha e_2^s, \\
            &e_2*e_2 = \beta e_1, \quad e_2 * e_2^s = \beta e_1^s, \quad \quad  e_2^s*e_2 = \beta e_1^s, \\
            &e_1^s*e_2^s = e_3, \,\,\,\, \quad e_3^s*e_2^s = -2e_2, \,\, \quad e_3^s * e_1^s = 2 e_1.
        \end{split}
    \end{equation*}
    Observe that $\mathfrak{J}(\mathcal{P}) e_3 = e_3 \mathfrak{J}(\mathcal{P}) = 0$. Hence, the superalgebra $\mathfrak{J}(\mathcal{P})$ is not simple. {Also, note that $\mathcal{P}\mathcal{P}\neq \mathcal{P}$ and that the proper subspace $\mathcal{I} = \textrm{span}(\beta e_1, \alpha e_2)$ is a quasi-ideal of $\mathcal{P}$.}
\end{remark}

Moreover, suppose $\mathcal{P}$ is a simple transposed Poisson algebra such that $\mathcal{P}\mathcal{P}\neq \mathcal{P}$, then $\mathcal{P}\oplus (\mathcal{P}\mathcal{P})^s$ is an ideal of $\mathfrak{J}(\mathcal{P})$. Hence, if $\mathfrak{J}(\mathcal{P})$ is simple, then $\mathcal{P}$ is simple and $\mathcal{P}\mathcal{P}= \mathcal{P}$. Indeed, we have 
$$(\mathcal{P}\oplus (\mathcal{P}\mathcal{P})^s)*(\mathcal{P}\oplus \mathcal{P}^s)\subset \mathcal{P}\mathcal{P}+ (\mathcal{P}\mathcal{P})^s + ((\mathcal{P}\mathcal{P})\mathcal{P})^s + \left\{\mathcal{P}\mathcal{P}, \mathcal{P}\right\} \subset (\mathcal{P}\oplus (\mathcal{P}\mathcal{P})^s).$$

We want to investigate whether it is possible to obtain a simple Jordan superalgebra from a non-unital transposed Poisson algebra. The next key result about the quasi-ideals in a simple transposed Poisson algebra is obtained.

\begin{lemma}\label{perfectquasi}
    Any simple transposed Poisson (super) algebra $(\mathcal{P}, \circ, \left\{\cdot,\cdot\right\})$ such that $\mathcal{P}\mathcal{P} = \mathcal{P}$ contains no quasi-ideals.
\end{lemma}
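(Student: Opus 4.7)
The strategy is to exhibit a proper (graded) Lie ideal of $\mathcal{P}$ containing $\mathcal{I}$; by the simplicity of the associated Lie (super)bracket granted by Theorem \ref{main1} (respectively Theorem \ref{main2}), such a Lie ideal must vanish, forcing $\mathcal{I}=0$. The candidate is the ``right $\mathcal{I}$-absorber''
\[
\mathcal{L}\,:=\,\{\,x\in\mathcal{P}\ :\ x\mathcal{P}\subset\mathcal{I}\,\},
\]
taken as a graded subspace in the superalgebra case (graded because $\mathcal{I}$ is graded and the condition on homogeneous components decouples). The inclusion $\mathcal{I}\subset\mathcal{L}$ follows from the quasi-ideal property $\mathcal{I}\mathcal{P}\subset\mathcal{I}$, and $\mathcal{L}\subsetneq\mathcal{P}$ since $\mathcal{L}=\mathcal{P}$ would give $\mathcal{P}=\mathcal{P}\mathcal{P}\subset\mathcal{I}$, contradicting the properness of $\mathcal{I}$.

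The core of the argument is to verify $\{\mathcal{L},\mathcal{P}\}\subset\mathcal{L}$. Take a homogeneous $l\in\mathcal{L}$ and homogeneous $u,v,y,r\in\mathcal{P}$. Identity (\ref{propeq5}) (with the algebra analogue available from \cite[Theorem 2.5]{tpa}), specialised at $x=l$, reads, up to super-signs,
\[
\{lu,vy\}\pm\{lv,uy\}\ =\ \pm\,2\,uv\,\{l,y\}.
\]
Since $lu,lv\in l\mathcal{P}\subset\mathcal{I}$, multiplying both sides by $r$ sends each term of the left-hand side into $\{\mathcal{I},\mathcal{P}\}\mathcal{P}=\{\mathcal{P},\mathcal{I}\}\mathcal{P}\subset\mathcal{I}$ by the quasi-ideal hypothesis together with (super)antisymmetry. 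The (super)commutativity and associativity of $\circ$ let us rewrite the right-hand side, up to a nonzero scalar, as $\{l,y\}(uvr)$. Thus $\{l,y\}(uvr)\in\mathcal{I}$ for every $u,v,r$, and since $\mathcal{P}\mathcal{P}\mathcal{P}=\mathcal{P}\mathcal{P}=\mathcal{P}$, this forces $\{l,y\}\mathcal{P}\subset\mathcal{I}$, i.e., $\{l,y\}\in\mathcal{L}$. Therefore $\mathcal{L}$ is a (graded) Lie ideal.

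It only remains to invoke Theorem \ref{main1} (or Theorem \ref{main2} in the super case) to conclude that the Lie (super)bracket on $\mathcal{P}$ is simple, which forces the proper (graded) Lie ideal $\mathcal{L}$ to be zero, and hence $\mathcal{I}=0$. The abelian-Lie possibility permitted by Theorem \ref{main2} is handled directly: there any quasi-ideal collapses to a $\circ$-ideal, which the simplicity of $\mathcal{P}$ excludes. The main obstacle I expect is identifying the absorber $\mathcal{L}$ and realising that identity (\ref{propeq5}) is exactly the tool that promotes it from a mere $\circ$-absorber to a Lie ideal; this step uses the hypothesis $\mathcal{P}\mathcal{P}=\mathcal{P}$ in an essential way, since it is what allows $uvr$ to range over all of $\mathcal{P}$.
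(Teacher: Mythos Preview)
Your proof is correct and follows essentially the same route as the paper: the absorber $\mathcal{L}=\{x:x\mathcal{P}\subset\mathcal{I}\}$ is exactly the paper's maximal subspace $\mathcal{I}'$, and identity (\ref{propeq5}) is again the key tool used to show $\{\mathcal{L},\mathcal{P}\}\subset\mathcal{L}$. The one unnecessary detour is your appeal to Theorem~\ref{main1}/\ref{main2}: since $(lp)q=l(pq)\in l\mathcal{P}\subset\mathcal{I}$ by associativity, you already have $\mathcal{L}\mathcal{P}\subset\mathcal{L}$, so $\mathcal{L}$ is in fact a full (graded) ideal of the transposed Poisson structure and contradicts simplicity of $\mathcal{P}$ directly, without invoking simplicity of the Lie bracket.
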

\begin{proof}
    Suppose $\mathcal{I}$ is a quasi-ideal of $\mathcal{P}$. Choose a maximal subspace $\mathcal{I}'$ such that $\mathcal{I}'\mathcal{P} \subset \mathcal{I}$. Then $\mathcal{I}\subset \mathcal{I}'$ and $\mathcal{I}'\neq \mathcal{P}$, because $\mathcal{P}\mathcal{P}=\mathcal{P}$. Now, we have $\mathcal{I}'\mathcal{P} \subset \mathcal{I} \subset \mathcal{I}'$ and $\left\{\mathcal{I}, \mathcal{P}\right\}\subset \mathcal{I}'$, by the maximality of $\mathcal{I}'$. Moreover, using the transposed Leibniz rule, we have the following inclusions
    $$\left\{\mathcal{I}', \mathcal{P}\right\}= \left\{\mathcal{I}', \mathcal{P}\mathcal{P}\right\} \subset \mathcal{P}\left\{\mathcal{I}', \mathcal{P}\right\} + \left\{\mathcal{P}\mathcal{I}', \mathcal{P}\right\}.$$

    The first term of the sum above is $\mathcal{P}\left\{\mathcal{I}', \mathcal{P}\right\} = \mathcal{P}\mathcal{P}\left\{\mathcal{I}', \mathcal{P}\right\}\subset \left\{\mathcal{I}'\mathcal{P}, \mathcal{P}\mathcal{P}\right\}\subset \mathcal{I}'$, using equation (\ref{propeq5}). The second term is $\left\{\mathcal{P}\mathcal{I}', \mathcal{P}\right\}\subset \left\{\mathcal{I}, \mathcal{P}\right\}\subset \mathcal{I}'$. Therefore, the subspace $\mathcal{I}'$ is an ideal, which contradicts the simplicity.
\end{proof}

We can prove the simplicity criterion for the Kantor double of a transposed Poisson algebra. The converse was proved above.

\begin{theorem}
    Let $(\mathcal{P}, \circ, \left\{\cdot,\cdot\right\})$ be a simple transposed Poisson (super) algebra such that $\mathcal{P}\mathcal{P}=\mathcal{P}$, then $\mathfrak{J}(\mathcal{P})$ is simple.
\end{theorem}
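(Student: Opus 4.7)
The plan is to take a nonzero graded ideal $\mathcal{J}$ of $\mathfrak{J}(\mathcal{P})$ and conclude $\mathcal{J} = \mathfrak{J}(\mathcal{P})$. I set $\mathcal{A} := \mathcal{J} \cap \mathcal{P}$ and $\mathcal{B}^s := \mathcal{J} \cap \mathcal{P}^s$; in the algebra case these give $\mathcal{J} = \mathcal{A} \oplus \mathcal{B}^s$ directly (the gradings of $\mathfrak{J}(\mathcal{P})$ and of $\mathcal{P}\oplus\mathcal{P}^s$ coincide), and in the super case one reduces to this by passing to the $\tau$-invariant sub-ideal $\mathcal{A} \oplus \mathcal{B}^s$, with $\tau$ the algebra automorphism of $\mathfrak{J}(\mathcal{P})$ fixing $\mathcal{P}$ and negating $\mathcal{P}^s$. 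Reading off the four products $\mathcal{J} \ast \mathfrak{J}(\mathcal{P})$, the ideal condition becomes exactly
\begin{equation*}
\mathcal{A}\circ\mathcal{P} \subset \mathcal{A}\cap\mathcal{B},\qquad \mathcal{B}\circ\mathcal{P} \subset \mathcal{B},\qquad \{\mathcal{B},\mathcal{P}\}\subset \mathcal{A}.
\end{equation*}

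I then observe that $\mathcal{B}$ is a (graded) quasi-ideal of $\mathcal{P}$: the middle inclusion gives $\mathcal{P}\mathcal{B} \subset \mathcal{B}$, while combining the outer two gives $\{\mathcal{P},\mathcal{B}\}\mathcal{P} \subset \mathcal{A}\mathcal{P} \subset \mathcal{B}$. Since $\mathcal{P}\circ\mathcal{P}=\mathcal{P}$, Lemma~\ref{perfectquasi} forces $\mathcal{B}\in\{0,\mathcal{P}\}$.

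If $\mathcal{B}=\mathcal{P}$, the third inclusion becomes $\{\mathcal{P},\mathcal{P}\}\subset\mathcal{A}$; by Theorem~\ref{main1} (resp.\ Theorem~\ref{main2}), the associated Lie bracket is simple, hence perfect, so $\mathcal{A}=\mathcal{P}$ and $\mathcal{J}=\mathfrak{J}(\mathcal{P})$, as desired. If instead $\mathcal{B}=0$, the first inclusion collapses to $\mathcal{A}\circ\mathcal{P}=0$. Plugging $x\in\mathcal{A}$ into identity~(\ref{propeq5}) of Proposition~\ref{propeqs} kills the left-hand side and yields $\mathcal{P}\mathcal{P}\{\mathcal{A},\mathcal{P}\}=0$, so $\mathcal{P}\{\mathcal{A},\mathcal{P}\}=0$ by perfectness of $\circ$. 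Applying the transposed Leibniz rule to $\{a, p\circ q\}$ with $a\in\mathcal{A}$ (the $\{p\circ a,q\}$ term vanishes) then gives $\{\mathcal{A},\mathcal{P}\mathcal{P}\}=0$, i.e., $\{\mathcal{A},\mathcal{P}\}=0$. Thus $\mathcal{A}$ is an ideal of $\mathcal{P}$ and simplicity leaves $\mathcal{A}\in\{0,\mathcal{P}\}$; the possibility $\mathcal{A}=\mathcal{P}$ is incompatible with $\mathcal{P}\circ\mathcal{P}=\mathcal{P}$, so $\mathcal{A}=0$, and the sub-ideal is trivial---contradicting the hypothesis that $\mathcal{J}$ is nonzero.

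The expected main obstacle is the super case, specifically that a graded ideal need not be $\tau$-invariant, so the sub-ideal $\mathcal{A}\oplus\mathcal{B}^s$ may be strictly smaller than $\mathcal{J}$; handling this requires running the argument in parallel on $\mathcal{J}+\tau(\mathcal{J})$ (which is $\tau$-invariant and hence splits) and ruling out a direct-product decomposition $\mathfrak{J}(\mathcal{P}) = \mathcal{J}\oplus\tau(\mathcal{J})$ with mutually annihilating factors. The sign bookkeeping for the super analogues of (\ref{propeq1})--(\ref{propeq5}) is otherwise routine thanks to Proposition~\ref{propeqs}.
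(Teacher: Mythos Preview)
Your core argument—extracting a quasi-ideal and invoking Lemma~\ref{perfectquasi}—is exactly the paper's, and your handling of the two cases $\mathcal{B}\in\{0,\mathcal{P}\}$ is correct. There are two mild variations: you take $\mathcal{B}$ itself as the quasi-ideal while the paper uses $\mathcal{I}_0\cap\mathcal{I}_1$; and in the case $\mathcal{B}=\mathcal{P}$ you finish via Theorem~\ref{main1} (simple bracket $\Rightarrow$ perfect $\Rightarrow$ $\mathcal{A}=\mathcal{P}$), which is more self-contained than the paper's deferral of the analogous case to Kantor. Your extra Leibniz step in the $\mathcal{B}=0$ branch, upgrading $\mathcal{A}$ from a quasi-ideal to an honest ideal, is also a clean variant of what the paper does.

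The substantive divergence is in the super case. The paper never uses intersections: it works from the outset with the \emph{projections} $\mathcal{I}_0,\mathcal{I}_1$ of the ideal onto $\mathcal{P}$ and $\mathcal{P}^s$, which is precisely your proposed ``run the argument on $\mathcal{J}+\tau(\mathcal{J})$'' (the projections are the $\tau$-eigencomponents of that sum). This choice makes the derivation of the three inclusions and the quasi-ideal step uniform across the algebra and super settings, with no separate $\tau$-invariance discussion needed. What remains—your unfinished task of ruling out $\mathfrak{J}(\mathcal{P})=\mathcal{J}\oplus\tau(\mathcal{J})$—corresponds in the paper's setup to the case $\mathcal{I}_0\cap\mathcal{I}_1=\mathcal{P}$ (both projections full but $\mathcal{J}$ possibly still proper), and the paper does not argue this step either: it simply cites \cite{kantor92}. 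So your acknowledged gap in the super case is real, but the paper's proof is no more explicit at that exact point; adopting projections from the start just localises the missing piece more cleanly.
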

\begin{proof}
    Suppose $\mathcal{I}$ is an ideal with projections $\mathcal{I}_0$, $\mathcal{I}_1^s$ on $\mathcal{P}$ and $\mathcal{P}^s$, respectively. By the definition of the Kantor double, we have the following relations
    $$\mathcal{P}\mathcal{I}_0\subset \mathcal{I}_0\cap \mathcal{I}_1, \quad \mathcal{P}\mathcal{I}_1\subset \mathcal{I}_1, \quad \left\{\mathcal{P}, \mathcal{I}_1\right\}\subset \mathcal{I}_0.$$

    The subspace $\mathcal{J} := \mathcal{I}_0\cap \mathcal{I}_1$ is a quasi-ideal of $\mathcal{P}$, since $\mathcal{J}\mathcal{P}\subset \mathcal{J}$ and $\left\{\mathcal{J}, \mathcal{P}\right\}\mathcal{P}\subset \mathcal{I}_0\mathcal{P}\subset \mathcal{J}$. 
    So either $\mathcal{J}=0$ or $\mathcal{J}=\mathcal{P}$, by Lemma \ref{perfectquasi}. 
    If $\mathcal{J} = 0$, then 
        $\mathcal{I}_0\mathcal{P}=0$ and, using the equation (\ref{propeq5}) we have $\mathcal{P}\left\{\mathcal{I}_0, \mathcal{P}\right\}= \mathcal{P}\mathcal{P}\left\{\mathcal{I}_0, \mathcal{P}\right\}\subset \left\{\mathcal{I}_0\mathcal{P}, \mathcal{P}\mathcal{P}\right\} \subset 0$, so $\mathcal{I}_0$ is a quasi ideal, and then either $\mathcal{I}_0 = 0$ or $\mathcal{I}_0 = \mathcal{P}$. In the first case, we have that  $\mathcal{I}_1$ is a non-zero ideal of $\mathcal{P}$, so $\mathcal{I}_1=\mathcal{P}$. But then $\mathcal{I}*\mathcal{P} = \mathcal{I}_1^s*\mathcal{P}^s=\left\{\mathcal{P}, \mathcal{P}\right\}\neq0$, which is a contradiction. In the second case, $\mathcal{I}_0=\mathcal{P}$ implies that $\mathcal{P}\mathcal{P}\subset \mathcal{J} = 0$, another contradiction. 
        If $\mathcal{J} = \mathcal{P}$, then the arguments are analogous to \cite[Theorem 3.4]{kantor92}.
\end{proof}

The following is an example of a family of simple infinite-dimensional Jordan superalgebras that arise from a non-Poisson transposed Poisson algebra, by considering the Kantor double. First, recall the notion of a mutation of an algebra.

\begin{definition}
    Let $(\mathcal{P}, \circ)$ be an associative commutative algebra and choose $q \in \mathcal{P}$. A mutation of the algebra $\mathcal{P}$ by the element $q$ is a new algebra $(\mathcal{P}, \circ_q)$, where for any $x, y \in \mathcal{P}$, we have the product
    $$x \cdot_q y = x\circ q \circ y.$$
\end{definition}

Let us construct the Jordan superalgebra that arises from the Laurent-Witt transposed Poisson algebra.

\begin{remark}
        Let us fix $\mathbb{F}$ to be the complex field. The Lie algebra of derivations of the algebra of Laurent Polynomials $\mathcal{L}$ is the Witt algebra $\mathcal{W}$. The Witt algebra is defined on the vector space generated by $\left\{e_i: i\in \mathbb{Z}\right\}$ and it is endowed with the multiplication $\left\{e_i, e_j\right\} = (i-j)e_{i+j}$. It is known that every transposed Poisson algebra constructed on the Witt algebra is a mutation of the algebra of Laurent polynomials \cite{FKL}. They are simple, none of these algebras is a Poisson algebra and they are unital if and only if the element in the mutation is invertible in $\mathcal{L}$. Recall that unital transposed Poisson algebras are contact brackets. Fix an element $q\in \mathcal{W}$. Let us denote by $\mathcal{P}_{\mathcal{W}}^{q}$ the transposed Poisson algebra consisting of the Witt algebra with the mutation of the Laurent polynomials corresponding to the element $q$. Then, the Kantor double is the vector space 
    $\mathfrak{J}(\mathcal{P}_{\mathcal{W}}^{q}) = \mathcal{W}\oplus \mathcal{W}^{s}$ with the multiplication:
    $$e_i*e_j = q e_{i+j}, \quad e_i^s*e_j = (q e_{i+j})^s, \quad e_i*e_j^s = (q e_{i+j})^s, \quad e_i^s*e_j^s = (i-j)e_{i+j}.$$    

         The associative commutative part of $\mathcal{P}_{\mathcal{W}}^{q}$ is perfect if and only if it is unital. So the Jordan superalgebras $\mathfrak{J}(\mathcal{P}_{\mathcal{W}}^{q})$ are simple if and only if $q$ is invertible in $\mathcal{L}$.
\end{remark}

In fact, they have to be unital to give rise to a simple Jordan superalgebra, as the following theorem shows.  Recall that an algebra  is differentiably simple for a family of derivations $\mathfrak{D}$ if it is non-trivial and it contains no $\mathfrak{D}$-invariant ideals. We say an algebra is differentiably simple if this is true for some family of derivations. Differentiably simple associative commutative algebras are unital \cite{posner}. 

\begin{theorem}\label{unital}
    Let $(\mathcal{P}, \circ, \left\{\cdot,\cdot\right\})$ be a simple transposed Poisson algebra  such that $\mathcal{P}\mathcal{P} = \mathcal{P}$, then $(\mathcal{P}, \circ)$ is unital.
\end{theorem}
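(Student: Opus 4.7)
The plan is to show that $(\mathcal{P}, \circ)$ is differentiably simple with respect to a suitable family of derivations, and then to invoke Posner's theorem (cited before the statement) to produce the unit. The derivations come from the Jordan superalgebra structure on $\mathfrak{J}(\mathcal{P})$ established in Theorem \ref{jordanbracket}: since $[L_{a^s}, L_{b^s}]$ is an even inner super-derivation of $\mathfrak{J}(\mathcal{P})$, its restriction to the even part $\mathcal{P}$ is a genuine derivation of $(\mathcal{P}, \circ)$, namely
$$\delta_{a, b}(x) = \left\{a, bx\right\} + \left\{b, ax\right\}, \qquad a, b, x \in \mathcal{P}.$$

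As a preliminary I would show that the annihilator $U := \{u \in \mathcal{P} : u\mathcal{P} = 0\}$ is zero. For $u \in U$, the Leibniz rule collapses to $\left\{xy, u\right\} = 2x\left\{y, u\right\}$, and by commutativity of $\circ$ also $\left\{xy, u\right\} = 2y\left\{x, u\right\}$; evaluating $\left\{x_1 x_2 x_3, u\right\}$ by associating in two different ways forces $x_1 x_2 \left\{x_3, u\right\} = 0$, and since $\mathcal{P}\mathcal{P} = \mathcal{P}$ this yields $\left\{\mathcal{P}, U\right\} \subset U$. Thus $U$ is an ideal of the transposed Poisson algebra $\mathcal{P}$, and simplicity (together with $\mathcal{P}\mathcal{P} \neq 0$) gives $U = 0$.

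The crux is the following claim: for any $\circ$-ideal $I$ of $\mathcal{P}$ invariant under all $\delta_{a, b}$, the subspace $I' := \mathcal{P} I$ is an ideal of the transposed Poisson algebra. To see this, I would rewrite $\delta_{a, b}(i) \in I$ via the Leibniz rule as $2(a\left\{b, i\right\} + b\left\{a, i\right\}) - 2\left\{ab, i\right\} \in I$, and couple it with the identity (\ref{propeq1}) rearranged as $b\left\{a, i\right\} - a\left\{b, i\right\} = i\left\{a, b\right\} \in I$; adding and simplifying yields $\left\{a, bi\right\} \in I$ for all $a, b \in \mathcal{P}$ and $i \in I$, i.e.\ $\left\{\mathcal{P}, \mathcal{P} I\right\} \subset I$. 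Since $\mathcal{P}\mathcal{P} = \mathcal{P}$ forces $\mathcal{P} I' = I'$, and $I'$ is easily seen to be $\delta_{a,b}$-invariant (each $\delta_{a,b}$ is a derivation of $\circ$), the same extraction applied to $I'$ in place of $I$ produces $\left\{\mathcal{P}, I'\right\} \subset I'$. Hence $I'$ is a transposed Poisson ideal of $\mathcal{P}$, and simplicity forces either $I' = 0$ (whence $I \subset U = 0$) or $I' = \mathcal{P}$ (whence $I = \mathcal{P}$).

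This shows $(\mathcal{P}, \circ)$ is differentiably simple, and Posner's theorem yields the unit. The main obstacle is the extraction step that pulls $\left\{a, bi\right\} \in I$ out of the symmetric combination $\delta_{a, b}(i) \in I$: without coupling to the auxiliary identity (\ref{propeq1}) there is no apparent way to decouple the two bracket terms, and this is exactly what bridges the $\delta$-invariance of $I$ to the Lie-bracket structure needed in order to apply the simplicity of $\mathcal{P}$.
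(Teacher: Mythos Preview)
Your proof is correct and follows the same overall strategy as the paper---show that $(\mathcal{P},\circ)$ is differentiably simple and invoke Posner's theorem---but the execution differs in two respects worth noting. First, the paper works with the derivations $D_{xy}(a)=\{ax,y\}-a\{x,y\}$, verified by a direct computation; your $\delta_{a,b}$ are precisely the symmetrizations $-(D_{ab}+D_{ba})$, and your conceptual justification via inner derivations of the Jordan superalgebra $\mathfrak{J}(\mathcal{P})$ is an attractive alternative to the ad hoc definition. Second, to pass from a $\mathfrak{D}$-invariant $\circ$-ideal to a contradiction, the paper observes that such an ideal satisfies $\{\mathcal{I}\mathcal{P},\mathcal{P}\}\subset\mathcal{I}$, hence is a quasi-ideal, and then invokes Lemma~\ref{perfectquasi}; you instead extract $\{\mathcal{P},\mathcal{P}I\}\subset I$, iterate once (using $\mathcal{P}I'=I'$) to obtain a genuine transposed Poisson ideal $I'=\mathcal{P}I$, and close with the annihilator preliminary $U=0$. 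The paper's route is shorter once Lemma~\ref{perfectquasi} is available, while yours is more self-contained and avoids the quasi-ideal notion altogether.
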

\begin{proof}
    Let us prove that $(\mathcal{P}, \circ)$ is differentiably simple.
    Consider the set $\mathfrak{D}$ of linear endomorphisms of $\mathcal{P}$ given by 
    $$\mathfrak{D}= \left\{D_{xy}: x, y\in \mathcal{P}\right\},$$
    where $D_{xy}: \mathcal{P}\rightarrow \mathcal{P}$ is given by $D_{xy}(a) = \left\{ax, y\right\} - a\left\{x, y\right\}$. We have 

    \begin{equation*}
        \begin{split}
            D_{xy}(a)b + aD_{xy}(b) &= b \left\{ax, y\right\} - ab\left\{x, y\right\} + a\left\{bx, y\right\} - ab\left\{x, y\right\} \\ 
            &=\frac{1}{2}\left\{abx, y\right\} + \frac{1}{2}\left\{ax, by\right\} - ab\left\{x, y\right\} + \frac{1}{2}\left\{abx, y\right\} + \frac{1}{2}\left\{bx, ay\right\} - ab\left\{x, y\right\} \\ 
            &= \left\{abx, y\right\} - ab\left\{x, y\right\} = D_{xy}(ab).
        \end{split}
    \end{equation*}
    
    Therefore, $\mathfrak{D}$ is a family of derivations of the algebra $(\mathcal{P}, \circ)$. 
    Suppose there is some $\mathfrak{D}$-invariant ideal $\mathcal{I}$ of $(\mathcal{P}, \circ)$. Let us show that it is a quasi-ideal of $\mathcal{P}$. Indeed, using equation (\ref{propeq5}), we have
    $$\mathcal{P}\left\{\mathcal{I}, \mathcal{P}\right\} = \mathcal{P}\mathcal{P}\left\{\mathcal{I}, \mathcal{P}\right\} \subset \left\{\mathcal{I}\mathcal{P}, \mathcal{P}\mathcal{P}\right\}\subset  \left\{\mathcal{I}\mathcal{P}, \mathcal{P}\right\}.$$

    Since $\mathcal{I}$ is $\mathfrak{D}$-invariant, we have $\left\{ax, y\right\} - a\left\{x, y\right\} \in \mathcal{I}$, for any $x,y \in \mathcal{P}$ and $a\in \mathcal{I}$. That is, $\left\{ax, y\right\} \in \mathcal{I}$ and $\left\{\mathcal{I}\mathcal{P}, \mathcal{P}\right\} \subset \mathcal{I}$.
    This contradicts Lemma \ref{perfectquasi}, so $(\mathcal{P}, \circ)$ is differentiably simple. Hence, by \cite[Theorem 5]{posner}, it is unital.
\end{proof}

\black

\medskip

Let us close the paper with a special mention to the second functor introduced by Kantor in \cite{kantor92}.

\begin{remark} \label{examplelie}
    Kantor  defines a second construction that produces a Lie superalgebra,  given a Poisson algebra, by defining a new product $[\cdot, \cdot]$ on $\mathcal{P} \oplus \mathcal{P}^s$. Given $a, b \in \mathcal{P}$ and corresponding $a^s, b^s \in \mathcal{P}^s$, we set
\begin{equation*}
    [a, b] = \left\{a,b\right\}, \quad [a^s, b] = [a, b^s] = (\left\{a,b\right\})^s, \quad [a^s, b^s] = a\circ b.
\end{equation*}
However, a straightforward verification of the Jacobi superidentity for the complex $3$-dimensional transposed Poisson algebra ${\mathcal{P}}$ with a basis $e_1, e_2, e_3$  defined by the non-zero products:
$${\mathcal{P}}:\left\{ 
\begin{tabular}{l}
$ e_3 \cdot e_3 = e_1,$ \\ 
 $\left\{e_1, e_{3}\right\}=e_1+e_2,$ 
\end{tabular}%
\right. $$
shows that this ``double" is not always Lie in the transposed case. In fact, the Leibniz superidentity $$\mathfrak{L}(e_3^s,e_3^s,e_3^s)= -3(e_1+e_2)^s \neq 0.$$ 
\end{remark}


\begin{thebibliography}{9}

\bibitem{aa19}
Alahmadi, H., Alsulami, H.
Simplicity of Lie algebras of Poisson brackets
arXiv:1905.13571

\bibitem{tpa}
 Bai C., Bai R., Guo L., Wu Y.,
Transposed Poisson algebras, Novikov-Poisson algebras, and 3-Lie algebras, arXiv:2005.01110

\bibitem{bfk23}   Beites P.,    Fern\'andez Ouaridi A.,   Kaygorodov I., 
        The algebraic and geometric classification of transposed Poisson algebras, Revista de la Real Academia de Ciencias Exactas, Físicas y Naturales. Serie A. Matemáticas,   117, 2, 55 (2023).

\bibitem{bfk22}
 Beites P. D., 
 Ferreira B. L. M., 
  Kaygorodov I.,  
 Transposed Poisson   structures, arXiv:2207.00281

\bibitem{cantarini}
 Cantarini, N., Kac, V.
Classification of linearly compact simple Jordan and generalized Poisson superalgebras,
Journal of Algebra,
313 (1), 100-124 (2007).

\bibitem{FKL}
Ferreira B. L. M., Kaygorodov I., Lopatkin V., $\frac{1}{2}$-derivations of Lie algebras and transposed Poisson algebras, 
Revista de la Real Academia de Ciencias Exactas, Físicas y Naturales. Serie A. Matemáticas, 115 (2021), 3, 142. 

\bibitem{fili98}
Filippov V., $\delta$-Derivations of Lie algebras, Siberian Mathematical Journal, 39, 6, 1218–1230 (1998).


\bibitem{kac77}
Kac, V.G. Classification of simple z-graded lie superalgebras and simple jordan superalgebras, Communications in Algebra, 5:13, 1375-1400 (1977). 

\bibitem{kantor92}
Kantor, I.L., Jordan and Lie superalgebras defined by Poisson algebra, Algebra and analysis (Tomsk, 1989 ), 55–80, Amer. Math. Soc. Transi. Ser.2, 151, Amer. Math. Soc., Providence, RI (1992).




\bibitem{km95}
King, D., McCrimmon, K.  The kantor doubling process revisited, Communications in Algebra, 23:1, 357-372 (1995). 







\bibitem{mz19}
 Martínez, C., Zelmanov, E. Brackets, superalgebras and spectral gap. São Paulo J. Math. Sci. 13, 112–132 (2019). 

\bibitem{leger}
Leger, G., Luks, E.,
Generalized Derivations of Lie Algebras,
Journal of Algebra,
228, 1,
165-203 (2000).



\bibitem{posner}
Posner, E. Differentiably simple rings. 
Proc. Amer. Math. Soc. 11:337–343 (1960).

\bibitem{z14}
Zakharov, A. S.  Novikov–Poisson Algebras and Superalgebras of Jordan Brackets, Communications in Algebra, 42:5, 2285-2298, (2014).


\end{thebibliography}
\end{document}